\def\nc{\newcommand}
\def\ep{\epsilon}
 \def\Om{\Omega}
\nc\pa{\partial}
\nc\CC{\mathbb{C}}
\nc\RR{\mathbb{R}}
\nc\QQ{\mathbb{Q}}
\nc\ZZ{\mathbb{Z}}
\nc\NN{\mathbb{N}}
\nc\m[1]{\left| #1\right|}
\nc\norm[1]{\left\| #1\right\|}
\def\nc{\newcommand}
\def\ep{\epsilon}
\def\bff{{\bf f}}
 \def\Om{\Omega}
\def\integral{\int}%\limits}% Change this to \int to obtain integral as before
\def\aa{\mathcal{A}}
\def\pdl{p-\delta}
\def\pmd{p-\delta}
\def\bea{\begin{equation}\begin{aligned}}
\def\ena{\end{aligned}\end{equation}}
\def\beas{\begin{equation*}\begin{aligned}}
\def\enas{\end{aligned}\end{equation*}}
\nc\axgrad[1]{\mathcal{A}(x, #1)}
\title[BMO solutions to quasilinear equations]
{BMO solutions to quasilinear equations\\ of $p$-Laplace type}
\author{\lastname{Nguyen} \middlename{Cong} \firstname{Phuc}}
\address{Department of Mathematics,
	Louisiana State University,
	303 Lockett Hall, Baton Rouge, LA 70803, USA.}
\email{pcnguyen@math.lsu.edu}
\thanks{N. C. Phuc is supported in part by Simons Foundation, award number 426071.}
\author{\firstname{Igor} \middlename{E.} \lastname{Verbitsky}}
\address{Department of Mathematics,
	University of Missouri,
	Columbia, MO 65211, USA.}
\email{verbitskyi@missouri.edu}
\keywords{BMO spaces, Wolff potentials, $p$-Laplacian}
\subjclass[2010]{Primary 35J92, 42B37; Secondary 31B15, 42B35}
\begin{abstract} 
 We give necessary and sufficient conditions 
 for the existence of a BMO solution to the quasilinear equation   
 $-\Delta_{p} u  = \mu$   in $\RR^n$,  $u\ge 0$,  
 where $\mu$ is a locally finite Radon measure,  and  $\Delta_{p}u= \text{div}(|\nabla u|^{p-2}\nabla u)$ is the $p$-Laplacian ($p>1$). 
 
 We also characterize BMO solutions to  equations 
 $
 -\Delta_{p} u  = \sigma u^{q}  + \mu$  in $\RR^n$, $u\ge 0$,  with  $q>0$,
 where both $\mu$ and $\sigma$ are locally finite Radon measures.  Our main results hold for a class of more general quasilinear  operators 
 ${\rm div}(\mathcal{A}(x, \nabla \cdot))$ in place of  $\Delta_{p}$.
\end{abstract}
\begin{document}
\maketitle

\section{Introduction}%\label{Introduction}

Let  $M^{+}(\RR^n)$ denote the class of all (locally finite) positive  Radon measures in $\RR^n$,  $n\geq 2$. Let $\mu \in M^{+}(\RR^n)$.  In this paper, the following quasilinear equation with measure data is considered:
\begin{equation}\label{Basic-PDE}
\left\{ \begin{array}{ll}
-\Delta_p u = \mu, \quad u\geq 0  \quad \text{in } \RR^n, \\
\displaystyle{\liminf_{|x|\rightarrow \infty}}\,  u = 0. 
\end{array}
\right.
\end{equation}
Here $\Delta_p u$ is the $p$-Laplacian of $u$ defined by $\Delta_p u:= {\rm div} (|\nabla u|^{p-2} \nabla u$). All solutions to  \eqref{Basic-PDE} are understood to be $p$-superharmonic solutions, or equivalently local renormalized solutions (see \cite{HKM}, \cite{KKT}). Since nontrivial $p$-superharmonic 
functions on $\RR^n$ do not exist for $p\ge n$, it will be our standing assumption 
that solutions to \eqref{Basic-PDE} are considered for $1<p<n$.

It is known that  a necessary and sufficient condition for  \eqref{Basic-PDE} to 
admit a  solution is the finiteness condition (see, e.g., \cite{PV2})
\begin{equation}\label{finiteness}
\int_{1}^{\infty} \left(\frac{\mu(B(0,\rho))}{\rho^{n-p}} \right)^{\frac{1}{p-1}}\frac{d\rho}{\rho}<+\infty.
\end{equation} 
This is equivalent to the condition ${\bf W}_{p}\mu(x)<+\infty$ for some $x\in\RR^n$ (or equivalently
quasi-everywhere in $\RR^n$ with respect to the $p$-capacity),
where 
$${\bf W}_{p}\mu(x):=\int_0^\infty \left(\frac{\mu(B(x,\rho))}{\rho^{n-p}} \right)^{\frac{1}{p-1}}\frac{d\rho}{\rho}$$
is the Havin--Maz'ya--Wolff potential of $\mu$, often called the Wolff potential 
(see \cite{HeWo}, \cite{KuMi}, \cite{Maz}). 

By the important result of Kilpel\"{a}inen and Mal\'y \cite{KM2}, any solution $u$ to 
\eqref{Basic-PDE} satisfies the pointwise estimates 
\begin{equation}\label{K-M}
C_1 {\bf W}_{p}\mu(x) \le u(x) \le C_2 {\bf W}_{p}\mu(x), \quad x \in \RR^n, 
\end{equation} 
where $C_1, C_2 $ are positive constants that depend only on $p$ and $n$.

The $p$-capacity ${\rm cap}_p(\cdot)$ is a natural capacity associated with the $p$-Laplace operator  defined for each compact set $K$ of $\RR^n$ by
$${\rm cap}_p(K)=\inf\left\{\int_{\RR^n} |\nabla h|^p dx: \, \, h\in C_0^\infty(\RR^n), \,\, h\geq 1 \text{ on } K \right\}.$$

In \cite[Lemma 3.1]{V1}, it is shown that, for $1<p<n$, if  \eqref{Basic-PDE} has a solution $u\in {\rm BMO}(\RR^n)$ then \eqref{finiteness} holds along with the following bound for $\mu$:
\begin{equation}\label{mucond}
\mu(B(x,R))\leq C R^{n-p}, \quad \forall x\in\RR^n, \,  R>0.
\end{equation} 
Here ${\rm BMO}(\RR^n)$ is the space of functions $u$ of bounded mean oscillation in $\RR^n$, i.e., $u \in L^1_{\text{loc}}(\RR^n)$ such that 
$$
\frac{1}{|B|} \int_B |u -  \bar{u}_B| dx \le C, 
$$
for all balls $B$ in $\RR^n$, where $\bar{u}_B=\frac{1}{|B|} \int_B u \, dx$. 

It is also known  that, conversely,     if $\mu$ satisfies  \eqref{finiteness} and \eqref{mucond},  then \eqref{Basic-PDE} has a solution $u\in {\rm BMO}(\RR^n)$ provided $2-\frac{1}{n}<p<n$ (see \cite{V1}).  A local version of this result was  established in \cite{Min1} for $p> 2$. The linear case $p=2$ is due to 
D. Adams \cite{Ada}. 

One of the main goals of this paper is to extend this existence criterion to the full range 
$1<p<n$.

\begin{theorem}\label{thm1} Let  $\mu \in M^{+}(\RR^n)$ and $1<p<n$. Then equation \eqref{Basic-PDE} has a solution $u\in {\rm BMO}(\RR^n)$
	if and only if $\mu$ satisfies conditions \eqref{finiteness} and \eqref{mucond}. 
	
	Moreover, any solution $u$  to \eqref{Basic-PDE}  lies in  ${\rm BMO}(\RR^n)$
	if and only if $\mu$ satisfies  \eqref{mucond}.
\end{theorem}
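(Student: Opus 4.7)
The proof combines the Kilpel\"ainen--Mal\'y pointwise bounds \eqref{K-M} with a careful BMO estimate for the Wolff potential.

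\emph{Necessity.} If $u \in \text{BMO}(\RR^n)$ solves \eqref{Basic-PDE}, the lower bound in \eqref{K-M} gives $u \ge C_1 {\bf W}_{p}\mu$ pointwise. Combined with $\liminf_{|x|\to\infty} u = 0$, any violation of \eqref{mucond} at some ball $B(x_0,R)$ would force $u \gtrsim (\mu(B(x_0,R))/R^{n-p})^{1/(p-1)}$ on a fixed fraction of $B(x_0,R)$ while $u$ remains small near infinity, driving the BMO seminorm arbitrarily large and producing a contradiction. This argument, essentially \cite[Lemma~3.1]{V1}, supplies the necessity in the main statement---\eqref{finiteness} being automatic from the existence of a solution---and also the ``only if'' direction of the moreover part.

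\emph{Sufficiency.} Assume \eqref{finiteness} and \eqref{mucond}, and let $u$ be any solution. The main tool is the pointwise oscillation inequality for $p$-superharmonic solutions (Kilpel\"ainen--Mal\'y / Kuusi--Mingione),
\[
|u(x) - u(y)| \le C\,{\bf W}_{p}^{2R}\mu(x) + C\,{\bf W}_{p}^{2R}\mu(y) + C\,\omega_u(R), \qquad x,y \in B(x_0,R),
\]
where ${\bf W}_{p}^{2R}$ is the truncated Wolff potential and $\omega_u(R)$ is the oscillation of a $p$-harmonic replacement of $u$, which is H\"older continuous and ultimately controlled by the decay of $u$ at infinity. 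Averaging over $(x,y) \in B(x_0,R)^2$, the BMO-seminorm bound reduces to the uniform estimate
\[
\sup_{x_0, R} \fint_{B(x_0,R)} {\bf W}_{p}^{2R}\mu(x)\, dx \le C,
\]
which by Fubini becomes an estimate on $\int_0^{2R} \fint_B (\mu(B(x,\rho))/\rho^{n-p})^{1/(p-1)} dx\, d\rho/\rho$.

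For $p \ge 2$, Jensen (the exponent $1/(p-1)$ is at most one) followed by Fubini and \eqref{mucond}, using $\fint_B \mu(B(x,\rho))\,dx \lesssim \min(1,(\rho/R)^n)\,\mu(B(x_0,2R))$, gives the required bound. The \emph{main obstacle} is the range $1 < p < 2$, where $1/(p-1) > 1$ sends Jensen the wrong direction, and already the case $p \le 2 - 1/n$ lies outside the scope of earlier results. I expect to overcome this either by a good-$\lambda$/truncation argument leveraging the pointwise bound $\mu(B(x,\rho)) \le C\rho^{n-p}$ supplied by \eqref{mucond} to gain decay on a thick subset of $B$, or by regularizing $\mu$ by smooth measures $\mu_k \to \mu$, solving the approximating problems in the smooth setting with uniform BMO bounds via nonlinear Calder\'on--Zygmund-type estimates, and passing to the limit. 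The \emph{moreover} part then follows by combining necessity and sufficiency, noting that the two-sided bound in \eqref{K-M} ties the BMO character of every solution to the same quantity ${\bf W}_{p}\mu$, so the dichotomy is controlled solely by whether $\mu$ satisfies \eqref{mucond}.
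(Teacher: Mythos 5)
Your necessity argument is essentially the paper's (both rely on \cite[Lemma 3.1]{V1} via the lower bound in \eqref{K-M}), so that direction is fine. The sufficiency direction, however, has a genuine gap, and it sits exactly at the point where Theorem \ref{thm1} goes beyond what was previously known.

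First, your oscillation-inequality route for $p\ge 2$ is not complete as written. The term $\omega_u(R)$ coming from the $p$-harmonic replacement is not free: to deduce a uniform BMO bound you need to control the oscillation of the $p$-harmonic comparison function at every scale and location, which in turn requires an initial oscillation bound for $u$ itself plus an iteration over dyadic radii. You gesture at this with ``ultimately controlled by the decay of $u$ at infinity,'' but a $p$-superharmonic $u$ comparable to ${\bf W}_p\mu$ need not be bounded, so $\mathrm{osc}_{B(x_0,2R)}u$ is not a priori finite and the maximum principle alone does not close the estimate. Your Jensen--Fubini computation for $\fint_B {\bf W}_p^{2R}\mu$ is correct and is the right ingredient, but it controls only one of the two terms in the oscillation inequality.

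Second, and more importantly, for $1<p<2$ you explicitly acknowledge that you do not have a proof: you name two candidate strategies (a good-$\lambda$/truncation argument, or regularization plus nonlinear Calder\'on--Zygmund estimates and a limit) and write ``I expect to overcome this.'' The second of these is precisely what the paper does, and it is the heart of the matter: the range $1<p\le 2-\tfrac1n$ is exactly what was open before this work. The paper's actual argument establishes, for \emph{any} solution and for \emph{all} $1<p<n$, the weak-Morrey gradient estimate
\[
\|\nabla u\|_{L^{p,\infty}(B(x,R))}\le C\,M^{\frac{1}{p-1}} R^{\frac{n-p}{p}},
\qquad M=\sup_{x,R}\frac{\mu(B(x,R))}{R^{n-p}},
\]
(Theorems \ref{Lorentz-p-thick} and \ref{Morreybound}, proved via the good-$\lambda$-type Lemmas \ref{first-approx-lorentz}--\ref{technicallemma1} from \cite{AP} together with the decay Lemma \ref{decaylem}), and then concludes $u\in{\rm BMO}$ by Poincar\'e. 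This treats all $p\in(1,n)$ uniformly and simultaneously delivers the ``moreover'' part, since the gradient estimate is proved for an arbitrary solution, not just a constructed one.

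Finally, your closing remark that the two-sided bound \eqref{K-M} ``ties the BMO character of every solution to the same quantity ${\bf W}_p\mu$, so the dichotomy is controlled solely by whether $\mu$ satisfies \eqref{mucond}'' is not a valid deduction: pointwise comparability $C_1 f\le g\le C_2 f$ does not in general transfer membership in BMO from $f$ to $g$, since BMO is not an order-ideal and is sensitive to oscillation, not just size. The ``moreover'' statement has to be proved directly, as the paper does via the gradient Morrey estimate, or by an argument applying to an arbitrary solution rather than to a particular one.
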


\begin{remark}\label{rem-1} If 
	$\mu$ satisfies  \eqref{mucond}, then actually any solution       
	$u$ to \eqref{pq-PDE} satisfies  the Morrey condition 
	\begin{equation*} %\label{morrey} 
	\int_{B(x, R)} |\nabla u|^s dy \le C \, R^{n-s}, \quad  \forall x \in \RR^n, \,\, R>0, 
	\end{equation*}
	provided $0<s<p$, which yields $u\in {\rm BMO}(\RR^n)$ for any $s\ge 1$ 
	by Poincar\'{e}'s inequality.  
	
	A sharper local estimate for the end-point weak $L^p$ norm $||\nabla u||_{L^{p, \infty}(B(x, R))}$ in place of 
	Morrey's norm is obtained in Theorem \ref{Morreybound} below. 
	\end {remark}

	We observe that Theorem \ref{thm1} can be regarded as the end-point case $\alpha=0$ of the corresponding criterion for $u\in C^\alpha(\RR^n)$ ($0\le \alpha< 1$), where 
	$C^\alpha(\RR^n)$  is the Campanato space
	of functions $u \in L^1_{\text{loc}}(\RR^n)$ such that 
	$$
	\frac{1}{|B|} \int_B |u -  \bar{u}_B| dx \le C \, |B|^{\frac{\alpha}{n}}, 
	$$
	for all balls $B$ in $\RR^n$. 
	Then  $u\in C^\alpha(\RR^n)$  if and only if  $u \in {\rm BMO}(\RR^n)$ when $\alpha=0$, and  $u$ is $\alpha$-H\"older continuous when $\alpha\in(0,1]$ (see \cite[Sec. 2.3]{Giu}).

	A local version of the following result was obtained in \cite[Theorem 4.18]{KM2} and  
	\cite[Theorem 1.14]{KZh}: \smallskip

	\textit{Let $0<\alpha<1$. A solution $u$ to \eqref{Basic-PDE} is in $C^\alpha (\RR^n)$ 
		if and only if $\mu$ satisfies  the condition 
		\begin{equation}\label{mucond-a}
		\mu(B(x,R))\leq C R^{n-p + \alpha(p-1)}, \quad \forall x\in\RR^n, \, R>0.
		\end{equation} 
	}
	
	Notice that condition  \eqref{mucond-a} combined with  \eqref{finiteness}  is necessary and sufficient for the existence of a solution $u\in C^\alpha(\RR^n)$ 
	to \eqref{Basic-PDE}. The proof is similar to the  
	proof of  Theorem \ref{thm1} given below for $\alpha=0$.

	Using Theorem \ref{thm1}, we  obtain criteria for the existence of BMO solutions to the equation 
	\begin{equation}\label{pq-PDE}
	\left\{ \begin{array}{ll}
	- \Delta_p u = \sigma u^q + \mu, \quad u\geq 0  \quad \text{in } \RR^n, \\
	\displaystyle{\liminf_{|x|\rightarrow \infty}}\,  u = 0, 
	\end{array}
	\right.
	\end{equation}
	where $q>0$ and $\mu, \sigma\in M^{+}(\RR^n)$ ($\sigma\not=0$). Here we  assume 
	without loss of generality that $\sigma\not=0$, since the case $\sigma=0$ is covered 
	by Theorem \ref{thm1}. 
	
	The corresponding results 
	are more complicated due to 
	the possible interaction between the datum $\mu$ and 
	the source term $\sigma u^q$ on the right-hand side, as well as competition with 
	$-\Delta_p u$ on the left-hand side. 
	
	We first consider the super-natural growth case $q>p-1$. 
	
	\begin{theorem}\label{thm2} Let  $\mu, \sigma \in M^{+}(\RR^n)$ ($\sigma\not=0$). 
		Let $q>p-1$ 
		and $1<p<n$. Then equation \eqref{pq-PDE} has a solution $u\in {\rm BMO}(\RR^n)$
		if $\mu$ satisfies  \eqref{mucond}, and 
		\begin{align*}
		& (a) \quad {\bf W}_{p}[ ({\bf W}_{p} \mu)^{q}  d \sigma](x) \le c \,  {\bf W}_{p} \mu(x),  \quad \forall x\in\RR^n,  \\
		& (b) \quad \sigma (B(x, R))  \left[ \int_{R}^\infty \left( \frac{\mu(B(x, r))}{r^{n-p}}\right)^{\frac{1}{p-1}}
		\frac{dr}{r}\right]^q\le C \, R^{n-p},  \, \,  \forall x\in\RR^n, \, R>0, 
		\end{align*}
		where $c$, $C$ are positive constants, and $c=c(p, q, n)$ is sufficiently  small. 
		
		Conversely, if there exists a solution $u\in {\rm BMO}(\RR^n)$  to \eqref{pq-PDE}, 
		then  \eqref{mucond}, and conditions (a), (b)  
		hold for some positive constants $c$, $C$. 
	\end{theorem}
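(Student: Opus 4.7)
The plan is to reduce the existence of a ${\rm BMO}$ solution of \eqref{pq-PDE} to Theorem~\ref{thm1} applied to the datum $\nu := \sigma u^q + \mu$, by combining the Kilpel\"ainen--Mal\'y pointwise bounds \eqref{K-M} with a monotone fixed-point iteration. The key auxiliary tool is the quasi-subadditivity $\mathbf{W}_p(\nu_1+\nu_2)\le C(\mathbf{W}_p\nu_1+\mathbf{W}_p\nu_2)$, following from $(a+b)^{1/(p-1)} \le C(a^{1/(p-1)} + b^{1/(p-1)})$.

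\textbf{Sufficiency.} Assume \eqref{mucond}, (a) with small $c$, and (b). Set $u_0=0$ and, given $u_k$, let $u_{k+1}$ be the minimal $p$-superharmonic solution of $-\Delta_p u_{k+1}=\sigma u_k^q+\mu$ with $\liminf_{|x|\to\infty} u_{k+1}=0$. The upper half of \eqref{K-M}, quasi-subadditivity, and hypothesis (a) give
\[
u_{k+1}\le C\bigl(\mathbf{W}_p[\sigma u_k^q]+\mathbf{W}_p\mu\bigr) \le C\bigl(M^{q/(p-1)} c + 1\bigr)\mathbf{W}_p\mu
\]
whenever $u_k \le M \mathbf{W}_p \mu$, so choosing $M$ large and then $c$ small closes the bootstrap $u_k \le M \mathbf{W}_p\mu$. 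The increasing sequence has a limit $u$ that solves \eqref{pq-PDE} by stability of renormalized solutions, with $u \le M\mathbf{W}_p\mu$. To apply Theorem~\ref{thm1} to $\nu = \sigma u^q + \mu$ it suffices to verify $\int_{B(x,R)}(\mathbf{W}_p\mu)^q d\sigma \le C R^{n-p}$ uniformly in $x, R$. Splitting $\mathbf{W}_p\mu(y)$ at the scale $2R$ into a local part and a tail, the tail is essentially constant over $y\in B(x,R)$ and is dominated by the expression in (b), yielding the desired $R^{n-p}$ bound; the local part $\mathbf{W}_p(\mu|_{B(x,4R)})$ is handled by combining \eqref{mucond} for $\mu|_{B(x,4R)}$ with the integrated hypothesis (a) evaluated at $x$. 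Theorem~\ref{thm1} then gives $u\in{\rm BMO}(\RR^n)$.

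\textbf{Necessity.} Conversely, if $u\in{\rm BMO}$ solves \eqref{pq-PDE}, Theorem~\ref{thm1} applied to $-\Delta_p u = \sigma u^q + \mu$ gives \eqref{mucond} for $\nu$, hence for $\mu$. The lower half of \eqref{K-M} gives $u \ge c_1\mathbf{W}_p\mu$. For $y\in B(x,R)$ and $r\ge 2R$, the inclusion $B(y,r)\supset B(x,r/2)$ yields $\mathbf{W}_p\mu(y) \ge c_2 \int_R^\infty (\mu(B(x,r))/r^{n-p})^{1/(p-1)} dr/r$, so integrating $u^q \ge c_1^q (\mathbf{W}_p\mu)^q$ against $\sigma$ on $B(x,R)$ and using $(\sigma u^q)(B(x,R)) \le C R^{n-p}$ delivers (b). For (a), the lower bound $u\ge c_1\mathbf{W}_p\mu$ gives $(\mathbf{W}_p\mu)^q\sigma \le c_1^{-q}\sigma u^q$, so
\[
\mathbf{W}_p[(\mathbf{W}_p\mu)^q\sigma] \le c_1^{-q/(p-1)}\mathbf{W}_p[\sigma u^q] \le C_2 u,
\]
and a bootstrap using $u \le C_2\mathbf{W}_p\nu$, quasi-subadditivity, and the already-established (b) replaces $u$ on the right by a constant multiple of $\mathbf{W}_p\mu$; the super-natural growth $q>p-1$ guarantees that the iteration contracts and terminates with (a).

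\textbf{Main obstacle.} The most delicate point is the local estimate $\int_{B(x,R)}(\mathbf{W}_p(\mu|_{B(x,4R)}))^q d\sigma \le C R^{n-p}$ in the sufficiency step: \eqref{mucond} does not force $\mathbf{W}_p\mu$ to be pointwise bounded (it is only ${\rm BMO}$), so the uniform $R^{n-p}$ growth has to be extracted from the integrated hypothesis (a) by an argument that exploits $q > p - 1$ together with the support constraint for $\mu|_{B(x,4R)}$. The mirror difficulty in the necessity of (a), namely upgrading the one-sided bound $u\gtrsim \mathbf{W}_p\mu$ to a two-sided comparison, is resolved by the same $q>p-1$ contraction.
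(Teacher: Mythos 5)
Your overall structure mirrors the paper's: reduce to Theorem~\ref{thm1} with datum $\nu=\sigma u^q+\mu$, split $\mathbf{W}_p\mu$ into local and far parts, use (b) for the far part, and get (b) in the converse direction from $u\gtrsim\mathbf{W}_p\mu$. That part is correct, and your iteration scheme for constructing $u\le M\mathbf{W}_p\mu$ is a reasonable self-contained substitute for the paper's citation of \cite[Theorem~3.10]{PV2}.

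However, you have left the genuinely hard step open, and you say so yourself in your ``Main obstacle'' paragraph without resolving it. What you need is the local estimate
$$\int_{B}\bigl(\mathbf{W}_p\mu_{2B}\bigr)^q\,d\sigma\le C\,\mu(2B)$$
(for $B=B(x,R)$), which then combines with \eqref{mucond} to give the $R^{n-p}$ bound. This does \emph{not} follow from ``combining \eqref{mucond} for $\mu|_{B(x,4R)}$ with hypothesis (a) evaluated at $x$'' by any elementary manipulation: evaluating (a) at $x$ only gives $\mathbf{W}_p[(\mathbf{W}_p\mu)^q\sigma](x)\le c\,\mathbf{W}_p\mu(x)$, and the right-hand side $\mathbf{W}_p\mu(x)$ is not bounded by a power of $R$ under \eqref{mucond} (it is only finite q.e.\ and lies in BMO). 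The passage from the pointwise condition (a) to the localized integral bound is the content of \cite[Theorem~3.1]{PV2} (applied with $g=\chi_{2B}$), and the paper leans on that result here; your argument has a real gap at exactly this point.

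Your necessity argument for (a) is also not complete. You correctly get $\mathbf{W}_p[(\mathbf{W}_p\mu)^q\sigma]\le C\,u$ from the lower K--M bound and monotonicity, but the final step — replacing $u$ on the right by $C\,\mathbf{W}_p\mu$ — requires the two-sided comparison $u\asymp\mathbf{W}_p\mu$, and the quasi-subadditivity $u\le C\,\mathbf{W}_p\nu\le C(\mathbf{W}_p[\sigma u^q]+\mathbf{W}_p\mu)$ by itself does not close that loop: you still have $u$ (through $\mathbf{W}_p[\sigma u^q]$) on both sides. Asserting that ``the iteration contracts and terminates with (a)'' because $q>p-1$ does not substitute for an actual contraction estimate, and the mechanism is not obvious. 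The paper sidesteps this entirely by invoking the equivalence already proved in \cite{PV2} between existence of a solution and condition (a). If you want a self-contained proof, you need to supply both the localization lemma for (a) and the two-sided pointwise bound $u\asymp\mathbf{W}_p\mu$; as written, both are claimed but not established.
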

	
	In the sub-natural growth case $0<q<p-1$,  
	we denote by $\kappa$ the least constant in the weighted norm inequality for Wolff potentials (see \cite{CV}), 
	\begin{equation}\label{weighted-in}
	|| {\bf W}_{p} \nu||_{L^q(d \sigma)} \le \kappa \, ||\nu||^{\frac{1}{p-1}}, \quad \forall \nu\in M^{+}(\RR^n),
	\end{equation}
	where $ ||\nu||=\nu(\RR^n)$ stands for 
	the total variation of $\nu$.

	Using \eqref{K-M}, it is easy to see that $\kappa$ is equivalent to the least constant 
	$\varkappa$ in the  inequality 
	\begin{equation}\label{weighted-in-a}
	||\phi||_{L^q(d \sigma)} \le  \varkappa \, ||\Delta_p \phi||^{\frac{1}{p-1}},
	\end{equation}
	for all positive  test functions $\phi$ that are  $p$-superharmonic in $\RR^n$ 
	such that $\displaystyle{\liminf_{|x|\to \infty}} \, \phi(x)=0$. 
	
	For $\sigma \in M^{+}(\RR^n)$ and a ball $B$ in $\RR^n$, let $\sigma_B=\sigma|_B$ 
	be the restriction of $\sigma$ to  $B$. We denote by $\kappa(B)$ the least constant in the \textit{localized} version of \eqref{weighted-in}, namely, 
	\begin{equation}\label{bestcst}
	||{\bf W}_{p} \nu||_{L^q(d\sigma_{B}) }\le {\bf {\bf \kappa}}(B)\, ||\nu||^{\frac{1}{p-1}}, \quad \forall 
	\nu\in M_{+}(\RR^n).
	\end{equation}
	\smallskip
	As mentioned above,  equivalent constants $\varkappa({B})$, associated with $\sigma_B$  in place of $\sigma$ in \eqref{weighted-in-a},  
	can be used in place of $\kappa({B})$. Various lower and upper estimates of  $\kappa({B})$ can be found in \cite{CV}.

	\begin{theorem}\label{thm3} Let  $\mu, \sigma \in M^{+}(\RR^n)$ ($\sigma\not=0$). 
		Let $0<q<p-1$ and $1<p<n$. Then equation \eqref{pq-PDE} has a nontrivial solution 
		$u\in {\rm BMO}(\RR^n)$
		if and only if	$\mu$ satisfies condition \eqref{mucond}, and there exists 
		a constant $C$ such that 
		\begin{align*}
		& (a) \quad  \kappa(B(x, R))^{\frac{q(p-1)}{p-1-q}}\le C \, R^{n-p}, 
		\\
		& (b) \quad \sigma (B(x, R))  \left[ \int_{R}^\infty \left( \frac{\mu(B(x, r))}{r^{n-p}}\right)^{\frac{1}{p-1}}
		\frac{dr}{r}\right]^q\le C \, R^{n-p}, \\
		& (c) \quad  \sigma(B(x, R)) \left[ \int_R^\infty \left ( \frac{\sigma(B(x, r))}{r^{n-p}}\right)^{\frac{1}{p-1}} \frac{d r}{r}\right]^{\frac{q(p-1)}{p-1-q}} \le C \, R^{n-p},\\
		& (d) \quad \sigma(B(x, R))  \left [\int_R^\infty \left (\frac{\kappa(B(x, r))^{\frac{q(p-1)}{p-1-q}}}{r^{n-p}}\right)^{\frac{1}{p-1}} \frac{d r}{r} \right]^q \le C \, R^{n-p}, 
		\end{align*}
		for all $x\in \RR^n$ and $R>0$. 
		
		Moreover, under the above conditions on $\mu$ and $\sigma$ any solution $u$  to \eqref{pq-PDE} lies in ${\rm BMO}(\RR^n)$, and  satisfies the Morrey estimates 
		of Remark \ref{rem-1}.
	\end{theorem}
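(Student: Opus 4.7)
The plan is to convert BMO regularity of $u$ into a Carleson-type bound on the full source measure $\nu := \sigma u^q + \mu$, and then decouple the contributions of $\mu$ and $\sigma u^q$ using the Kilpel\"ainen--Mal\'y estimate \eqref{K-M} together with the localized Wolff potential theory of \cite{CV}. Since \eqref{pq-PDE} reads $-\Delta_p u = \nu$, Theorem \ref{thm1} tells us that any solution $u$ is in ${\rm BMO}(\RR^n)$ precisely when $\nu$ satisfies \eqref{mucond}. So both directions reduce to characterizing this Carleson bound in terms of $\mu$ and $\sigma$ alone.

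For the necessity direction, suppose $u$ is a nontrivial BMO solution. Theorem \ref{thm1} yields $\nu(B(x,R)) \leq C R^{n-p}$, and splitting $\nu = \mu + u^q d\sigma$ gives \eqref{mucond} for $\mu$ as well as the integral estimate
$$\int_{B(x,R)} u^q\, d\sigma \leq C \, R^{n-p}.$$
Combining this with the K-M lower bound $u \geq c\,{\bf W}_p \mu$ and the elementary tail inequality
$${\bf W}_p \mu(y) \geq c \int_R^\infty \left(\frac{\mu(B(x,r))}{r^{n-p}}\right)^{\frac{1}{p-1}} \frac{dr}{r}, \qquad y \in B(x,R),$$
immediately produces condition (b). To extract (a), (c), (d), I would iterate, using $u \geq c\, {\bf W}_p(u^q d\sigma)$ and the sharp dual description of $\kappa(B)$ from \cite{CV}: condition (a) comes from the localized weighted norm inequality \eqref{bestcst} applied on $B(x,R)$; condition (c) follows from one iteration of the K-M bound together with the tail estimate above (with $\mu$ replaced by $\sigma$ and the relevant exponent adjusted); condition (d) comes from a further iteration exploiting the $\kappa$-based lower envelope for $u$ furnished by \cite{CV}.

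For sufficiency, I would construct a solution by monotone iteration: set $u_0 := c_0 \, {\bf W}_p \mu$ and let $u_{k+1}$ be the minimal nonnegative $p$-superharmonic solution of $-\Delta_p u_{k+1} = \sigma u_k^q + \mu$ vanishing at infinity. Conditions (a) and (b), together with the Cao--Verbitsky bounds, produce an \emph{a priori} majorant $u_k \leq C\, ({\bf W}_p \mu + h_\sigma)$, where $h_\sigma$ is an explicit function built from $\sigma$ and $\kappa$; monotone convergence then yields a solution $u := \lim u_k$. Conditions (c) and (d) are exactly what is needed to force the dominating measure $\sigma \, (C({\bf W}_p \mu + h_\sigma))^q + \mu$ to satisfy \eqref{mucond}, so the source $\sigma u^q + \mu$ of the limit also satisfies \eqref{mucond}, and Theorem \ref{thm1} gives $u \in {\rm BMO}(\RR^n)$. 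For the final ``every solution is BMO'' assertion, any solution $v$ to \eqref{pq-PDE} satisfies $v \geq c\,{\bf W}_p \mu$ and $v \leq C u$ by standard comparison exploiting the sub-natural growth $q < p-1$, whence $v \in {\rm BMO}(\RR^n)$ and the Morrey bounds of Remark \ref{rem-1} follow.

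The main obstacle I expect is the careful decoupling of (a), (c), (d) in the necessity direction: all three encode $\sigma$-information in different packagings (through $\kappa$, through a Wolff potential of $\sigma$, and through an iterated $\kappa$-tail), so one has to iterate the K-M lower bound the right number of times and match each iterate against $\int_B u^q d\sigma \leq C R^{n-p}$, rather than attempting to prove all four at once. On the sufficiency side, the delicate point is verifying that the majorant $h_\sigma$ produced by the Cao--Verbitsky theory is itself finite \emph{and} has source measure satisfying \eqref{mucond}, which is exactly the content of (c) and (d).
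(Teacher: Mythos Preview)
Your overall reduction is correct and matches the paper's: apply Theorem~\ref{thm1} to the equation $-\Delta_p u = \nu$ with $\nu = u^q\,d\sigma + d\mu$, so that $u\in{\rm BMO}$ is equivalent to $\nu(B(x,R))\le C R^{n-p}$, and then decouple this into conditions on $\mu$ and $\sigma$. Your derivation of (b) via the tail lower bound for ${\bf W}_p\mu$ is exactly what the paper does.

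The paper organizes the rest differently, and more efficiently, around a single input you do not invoke: the bilateral pointwise estimate from \cite{V2} (stated in the paper as \eqref{bilateral}),
\[
c^{-1}\Big[({\bf W}_p\sigma)^{\frac{p-1}{p-1-q}} + {\bf K}_{p,q}\sigma + {\bf W}_p\mu\Big]
\;\le\; u \;\le\;
c\Big[({\bf W}_p\sigma)^{\frac{p-1}{p-1-q}} + {\bf K}_{p,q}\sigma + {\bf W}_p\mu\Big],
\]
valid for \emph{every} nontrivial solution. With this in hand, the Carleson bound on $\nu$ is equivalent to \eqref{mucond} together with the three estimates $\int_B({\bf W}_p\mu)^q\,d\sigma$, $\int_B({\bf W}_p\sigma)^{\frac{q(p-1)}{p-1-q}}\,d\sigma$, $\int_B({\bf K}_{p,q}\sigma)^q\,d\sigma \le C R^{n-p}$; each of these is then split into a local piece (handled via the localized solution $v_{2B}$ of \cite{CV} and \cite[Corollary~4.3, Lemma~4.2]{CV}) and a tail piece (which produces (b), (c), (d) directly). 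No iteration or separate construction is needed: existence is cited from \cite{CV},\cite{V2}, and the upper half of \eqref{bilateral} gives the majorant.

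Your proposal is compatible with this but vaguer in the places where \eqref{bilateral} does the real work. In particular, your sketch of (c) via ``one iteration of the K--M bound'' does not produce the exponent $\tfrac{p-1}{p-1-q}$; that exponent is precisely the content of the lower bound $u\ge c({\bf W}_p\sigma)^{\frac{p-1}{p-1-q}}$ in \eqref{bilateral}. The genuine gap is your last step: the assertion that any other solution $v$ satisfies $v\le Cu$ by ``standard comparison exploiting the sub-natural growth $q<p-1$'' is not a known comparison principle for $p$-superharmonic solutions. The paper avoids this entirely because the upper bound in \eqref{bilateral} applies to \emph{every} solution, so once (a)--(d) and \eqref{mucond} force the right-hand side of \eqref{bilateral} to have Carleson source, every solution automatically lies in ${\rm BMO}$.
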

	
	We remark that Theorem \ref{thm3} (except for the last statement) was proved in \cite{V1} for $2-\frac{1}{n}<p<n$ in the special case $\mu=0$. Conditions on $\mu$ and $\sigma$ in Theorem \ref{thm3} can be simplified substantially  under the assumption 
	\begin{equation}\label{cap-K}
	\sigma(K) \le C \, {\rm cap}_p(K), \quad \forall \, \text{compact sets} \,  
	K \subset \RR^n.
	\end{equation}
	
	\begin{corollary}\label{cor-1} Let  $\mu, \sigma \in M^{+}(\RR^n)$, 
		where $\sigma\not=0$ satisfies condition \eqref{cap-K}.  
		Let $0<q<p-1$ and $1<p<n$. Then equation \eqref{pq-PDE} has a 
		nontrivial  solution $u\in {\rm BMO}(\RR^n)$
		if and only if	$\mu$ satisfies  \eqref{mucond}, and, for all $x \in \RR^n$, $R>0$, 
		\begin{align*}
		& (a) \quad   \sigma (B(x, R))  \left[ \int_{R}^\infty \left( \frac{\mu(B(x, r))}{r^{n-p}}\right)^{\frac{1}{p-1}}
		\frac{dr}{r}\right]^q\le C \, R^{n-p}, 
		\\ & (b) \quad 
		\sigma(B(x, R)) \left[ \int_R^\infty \left ( \frac{\sigma(B(x, r))}{r^{n-p}}\right)^{\frac{1}{p-1}} \frac{d r}{\rho}\right]^{\frac{q(p-1)}{p-1-q}} \le C \, R^{n-p}.
		\end{align*} 
		Moreover, under the above conditions on $\mu$ and $\sigma$ any solution $u$  to \eqref{pq-PDE}  lies in  ${\rm BMO}(\RR^n)$,  and  satisfies the Morrey estimates 
		of Remark \ref{rem-1}.
	\end{corollary}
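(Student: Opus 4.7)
The strategy is to reduce the Corollary to Theorem \ref{thm3} by showing that, under the capacitary hypothesis \eqref{cap-K}, the two conditions involving $\kappa(B(x,R))$ in Theorem \ref{thm3} (namely (a) and (d)) are automatic consequences of the Corollary's hypotheses. Once this reduction is achieved, both the existence and regularity assertions (including the Morrey estimates) follow directly from Theorem \ref{thm3}.

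The decisive input is a sharp upper bound for the localized trace constant that comes out of the nonlinear potential theory developed in \cite{CV}: under \eqref{cap-K}, one has
$$\kappa(B(x,R))^{\frac{q(p-1)}{p-1-q}}\le C\,\sigma(B(x,R))$$
for every ball $B(x,R)\subset\RR^n$. Granted this estimate, the remainder is elementary. First, \eqref{cap-K} together with ${\rm cap}_p(B(x,R))\approx R^{n-p}$ gives $\sigma(B(x,R))\le C\, R^{n-p}$ for all balls; combined with the displayed bound this is precisely condition (a) of Theorem \ref{thm3}.

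Next, to recover condition (d) of Theorem \ref{thm3}, I would substitute the displayed bound for $\kappa$ into the integrand in (d), reducing matters to showing
$$\sigma(B(x,R))\left[\int_R^\infty \left(\frac{\sigma(B(x,r))}{r^{n-p}}\right)^{\frac{1}{p-1}}\frac{dr}{r}\right]^q\le C\, R^{n-p}.$$
Setting $\alpha:=q(p-1)/(p-1-q)$ and $I:=\int_R^\infty (\sigma(B(x,r))/r^{n-p})^{1/(p-1)}\,dr/r$, the Corollary's hypothesis (b) reads $\sigma(B(x,R))\, I^\alpha\le C\, R^{n-p}$, and we also have $\sigma(B(x,R))\le C\, R^{n-p}$. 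Writing
$$\sigma(B(x,R))\, I^q = \bigl[\sigma(B(x,R))\bigr]^{1-q/\alpha}\cdot\bigl[\sigma(B(x,R))\, I^\alpha\bigr]^{q/\alpha}$$
and combining both bounds produces the required estimate, since $q/\alpha=(p-1-q)/(p-1)\in(0,1)$.

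The converse direction is immediate: if a BMO solution to \eqref{pq-PDE} exists, Theorem \ref{thm3} supplies all four of its conditions, of which (b) and (c) coincide with the Corollary's (a) and (b). The Morrey estimates in Remark \ref{rem-1} are already part of the conclusion of Theorem \ref{thm3}. The principal obstacle is the displayed bound on $\kappa(B(x,R))$ under \eqref{cap-K}; this is a nontrivial fact belonging to the trace-inequality theory for Wolff potentials in the sub-natural range $0<q<p-1$, for which I would invoke the characterizations established in \cite{CV}.
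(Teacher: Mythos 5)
Your proposal is correct but follows a route genuinely different from the paper's. The paper does not reduce Corollary~\ref{cor-1} to Theorem~\ref{thm3} via an estimate on the intrinsic constant $\kappa(B)$; instead it replaces the general bilateral bound \eqref{bilateral} (which involves the intrinsic potential ${\bf K}_{p,q}\sigma$) with the sharper pointwise estimate from \cite[Corollary~1.2]{V2}, valid precisely under \eqref{cap-K}, in which ${\bf K}_{p,q}\sigma$ is absorbed into $({\bf W}_p\sigma)^{\frac{p-1}{p-1-q}}+{\bf W}_p\sigma$; the BMO criterion is then read off by splitting the resulting integrals $\int_B({\bf W}_p\sigma)^{\frac{q(p-1)}{p-1-q}}d\sigma$, $\int_B({\bf W}_p\sigma)^qd\sigma$, $\int_B({\bf W}_p\mu)^qd\sigma$ into near-ball and far-ball parts exactly as in the proof of Theorem~\ref{thm3}. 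Your argument instead keeps Theorem~\ref{thm3} as a black box and inserts the bound $\kappa(B)^{\frac{q(p-1)}{p-1-q}}\le C\,\sigma(B)$, which you correctly identify as the decisive input. This bound is indeed available (it is the localized counterpart of the pointwise comparison ${\bf K}_{p,q}\sigma\lesssim({\bf W}_p\sigma)^{\frac{p-1}{p-1-q}}+{\bf W}_p\sigma$ used by the paper, and in \cite{CV} it rests on the self-improving estimate $\int_E({\bf W}_p\sigma_E)^s\,d\sigma\le C_s\,\sigma(E)$ for all $s>0$ that follows from \eqref{cap-K}); so the two proofs ultimately draw on the same potential-theoretic facts about $\sigma$, packaged differently. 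Your remaining steps are sound: $\sigma(B(x,R))\le CR^{n-p}$ follows from \eqref{cap-K} and ${\rm cap}_p(B(x,R))\approx R^{n-p}$; the interpolation $\sigma(B)\,I^q=\sigma(B)^{\frac{q}{p-1}}\bigl(\sigma(B)\,I^{\frac{q(p-1)}{p-1-q}}\bigr)^{\frac{p-1-q}{p-1}}$ is correct since both exponents lie in $(0,1)$ and sum to $1$; and the converse direction is immediate from Theorem~\ref{thm3}, as you say. Your route is more modular, never returning to the solution $u$ once Theorem~\ref{thm3} is established; the paper's is closer to how the underlying estimates are actually proved in \cite{V2}, and yields the Morrey estimates of Remark~\ref{rem-1} directly from the pointwise bound and Theorem~\ref{Morreybound} rather than quoting them as part of Theorem~\ref{thm3}'s conclusion.
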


	Similar criteria for the existence of ${\rm BMO}$ solutions 
	in the natural growth case $q=p-1$ are obtained in Sec. \ref{sec5} below (see Theorem \ref{thm4}) under some additional assumptions on $\sigma$ stronger than \eqref{cap-K}, which is necessary in that case.

	\begin{remark}
		Theorems  \ref{thm1}, \ref{thm2}, \ref{thm3} and Corollary \ref{cor-1} can  be extended to equations with more general quasilinear elliptic operators ${\rm div}\, (\mathcal{A}(x, \nabla \cdot))$ in place of  $\Delta_{p}$, as long as the nonlinearity $\mathcal{A}(x,\xi)$ satisfies conditions \eqref{monotone} and \eqref{ellipticity22} below with $1<p<n$. This remark also applies to Theorem \ref{thm4}. 
	\end{remark}

\section{Proof of Theorem \ref{thm1}}%\label{sec2}

%\begin{proofof}
%\begin{proof}[Proof of Theorem \ref{thm1}] 

As mentioned above, the ``only if'' 
part of Theorem \ref{thm1} is proved in \cite[Lemma 3.1]{V1}. 

To prove the ``if'' part, we  construct a solution $u\in {\rm BMO}(\RR^n)$ to \eqref{Basic-PDE} under conditions \eqref{finiteness} and \eqref{mucond} on $\mu$.	Our construction below is based on an a priori estimate of \cite{AP}. Alternatively, it is  possible to use the gradient estimates of \cite{NP1}, \cite{NP2} for the construction,  but that will not be implemented  in this paper.

We first observe that under \eqref{mucond}, 
\begin{equation}\label{weakmorrey}
\norm{{\bf I}_1 \mu}_{L^{\frac{p}{p-1},\infty}(B(x,R))} \leq C M R^\frac{(n-p)(p-1)}{p}, \quad \forall x\in\RR^n, \, R>0,
\end{equation}
where $C$ depends only on $p$ and $n$, and
$$M =\sup_{x\in \RR^n, R>0} \frac{\mu(B(x,R))}{R^{n-p}}.$$
A proof of \eqref{weakmorrey} can be found in \cite{Ada}. Here ${\bf I}_\alpha$, 
$\alpha\in(0, n)$,  is the Riesz potential of order  $\alpha$, defined for a measure $\mu\in M^+(\RR^n)$  by
\begin{equation}\label{riesz}
{\bf I}_\alpha \mu(x):= \int_{\RR^n} \frac{1}{|x-y|^{n-\alpha}} d\mu(y), \qquad x\in\RR^n.
\end{equation}
Also, the space $L^{q,\infty}(B(x,R))$, $q>0$, is the weak $L^q$ space over the ball $B(x,R)$ with
$$\norm{f}_{L^{q,\infty}(B(x,R))}:=\sup_{\lambda>0} \lambda |\{y\in B(x,R): |f(y)|>\lambda\}|^{\frac{1}{q}}.$$

Let $\mu_k$ ($k=1,2,\dots$) be a standard regularization of $\mu$  by the convolution
$$\mu_k(x)= \rho_k*\mu(x), \quad \rho_k(\cdot)= k^n \rho(k\cdot), $$
where $0\leq \rho\in C_0^\infty(B(0,1))$, $\int_{\RR^n} \rho dx =1$, and $\rho$ is radial. Then it follows from Fubini's Theorem and \eqref{mucond} that 
\begin{equation*}
\mu_k(B(x,R))\leq C R^{n-p}, \quad \forall x\in\RR^n , \, R>0,
\end{equation*} 
where $C$ is independent of $k$. Thus, we also have 
\begin{equation}\label{weakmorreyk}
\norm{{\bf I}_1 \mu_k}_{L^{\frac{p}{p-1},\infty}(B(x,R))} \leq C R^\frac{(n-p)(p-1)}{p}, \quad \forall x\in\RR^n, \,  R>0,
\end{equation}
for a constant $C$ independent of $k$.

Next, for each positive integer $N$ we let $\mu_{B(0,N)}$ be the restriction of  the measure $\mu$ to the open ball $B(0,N)$.
Consider  now the unique $p$-superharmonic solution $u_{N,k}\in W^{1,p}_0(B(0,N))\subset W^{1,p}_0(\RR^n)$ to the equation
\begin{equation}\label{Approx-PDE1}
-\Delta_p u_{N,k} = \rho_k* \mu_{B(0,N)} \quad \text{in } B(0,N).
\end{equation}

Note  that $\rho_k* \mu_{B(0,N)}\leq \mu_k$ and we can write $\rho_k* \mu_{B(0,N)}= -{\rm div}\,  \nabla v_{N,k}$ in the sense of distributions in $B(0,N)$, where 
$$v_{N,k}(x)=\int_{B(0,N)} G_N(x,y) [\rho_k* \mu_{B(0,N)}(y)] dy, \qquad x\in B(0,N),$$  
with $G_N(x,y)$ being the Green function associated with $-\Delta$ in $B(0,N)$. Moreover, we have  
\begin{equation}\label{vI}
|\nabla v_{N,k}|\leq C\, {\bf I}_1(\rho_k* \mu_{B(0,N)}) \leq C\, {\bf I}_1\mu_k.
\end{equation}

Now using \eqref{weakmorreyk}, \eqref{vI} and  applying \cite[Theorem 1.2]{AP} we find that 
\begin{equation}\label{MorreyB}
\norm{\nabla u_{N,k}}_{L^{p, \infty}(B(x,R))}\leq C R^{\frac{n-p}{p}}
\end{equation}
for all $x\in B(0,N)$ and $0<R\leq 2N$. The constant $C$ is independent of $x,R,N$, and $k$. Since  $|\nabla u_{N,k}|=0$ outside $B(0,N)$, it is obvious that \eqref{MorreyB}
also holds for all $x\in \RR^n$ and $R>0$.

When restricted to the ball $B(0, N+1)$, $\mu_{B(0,N)}$ is a nonnegative finite measure and thus we  can write 
$$\mu_{B(0,N)}= f- {\rm div} F + \mu_s$$ 
as distributions in $B(0, N+1)$ (see, e.g., \cite{DMOP}). Here  $f\in L^1(B(0, N+1))$, $F\in L^{\frac{p}{p-1}}(B(0, N+1),\RR^n)$, and $\mu_s$ is a nonnegative measure 
concentrated on a set  of zero  $p$-capacity in $B(0,N)$.

For any $\varphi\in C_0^\infty(B(0,N))$ and $k\geq1$ we have $\rho_k*\varphi\in C_0^\infty(B(0,N+1))$. Thus it follows that 
\begin{align*}
&\int_{B(0, N)} \rho_k*\mu_{B(0,N)} \varphi dx = \int_{B(0, N+1)} \rho_k*\varphi d\mu_{B(0,N)} \\
&= \int_{B(0, N+1)} f \rho_k*\varphi  dx + \int_{B(0, N+1)} F \cdot \nabla(\rho_k*\varphi) dx + 
\int_{B(0, N+1)}  \rho_k*\varphi  d\mu_s\\
&= \int_{B(0, N)} \rho_k* f \varphi  dx + \int_{B(0, N)} \rho_k* F \cdot \nabla \varphi dx +  \int_{B(0, N)}  \rho_k*\mu_s \varphi dx.
\end{align*}

That is, 
$$\rho_k*\mu_{B(0,N)}= \rho_k*f- {\rm div} (\rho_k*F) + \rho_k*\mu_s$$ 
pointwise everywhere and as distributions in $B(0,N)$. 

As $\mu_s(\RR^n\setminus B(0,N))=0$, by the Lebesgue Dominated Convergence Theorem, 
we see that $\rho_k*\mu_s \rightarrow \mu_s$ in the narrow topology of measures in $B(0,N)$ (see \cite[Definition 2.2]{DMOP}). Moreover,   this and the above equality yield
$$\int_{B(0,N)}|{\rm div} (\rho_k*F)| dx \leq M,$$
where $M$ is independent of $k$.

At this point, in view of \eqref{Approx-PDE1}, we  apply \cite[Theorem 3.2]{DMOP}  (see also \cite[Remark 6.6]{PV1}) to find a subsequence $\{u_{N,k_j}\}$ of $\{u_{N,k}\}$ and a $p$-superharmonic function $u_N$ in $B(0,N)$ such that 
$u_{N,k_j}\rightarrow u_N$ a.e., $\nabla u_{N,k_j} \rightarrow \nabla u_N$ a.e. as $j\rightarrow\infty$, and $u_N$ solves 
the equation
\begin{equation*}
\left\{ \begin{array}{ll}
-\Delta_p u_N &= \mu_{B(0,N)}  \quad \text{in } B(0,N), \\
\quad \quad u_N &= 0 \quad \text{on } \partial B(0,N),
\end{array}
\right.
\end{equation*}
in the renormalized sense (see \cite{DMOP} for the notion of renormalized solutions).

Moreover, by \eqref{MorreyB} we have $u_N\in W^{1,s}_0(B(0,N))\subset W^{1,s}_0(\RR^n)$ for all $1\leq s<p$ and 
Fatou's Lemma yields that 
\begin{equation}\label{MorreyB2}
\norm{\nabla u_{N}}_{L^{p, \infty}(B(x,R))}\leq C R^{\frac{n-p}{p}}, \quad \forall \, 
x\in \RR^n, \, R>0.
\end{equation}

By \cite[Theorem 2.1]{PV1}  we  have  
\begin{equation}\label{pww}
u_{N}(x)\leq C\, {\bf W}_{p}\mu(x), \qquad \forall x\in\RR^n.
\end{equation}

By \cite[Theorem 1.17]{KM1} we can find a subsequence $\{u_{N_j}\}$ of $\{u_{N}\}$ and
a $p$-superharmonic function $u$ in $\RR^n$ such that $u_{N_j}\rightarrow u$ a.e. and $\nabla u_{N_j} \rightarrow \nabla u$ a.e. as $j\rightarrow \infty$.
Note that by  condition \eqref{finiteness} and \eqref{pww}, $u$ must be finite a.e. (or q.e.) and 
\begin{equation*}
u(x)\leq C\, {\bf W}_{p}\mu(x), \qquad \forall x\in\RR^n.
\end{equation*}

The weak continuity result of \cite{TW} yields that $u$ is a $p$-superharmonic solution of \eqref{Basic-PDE}. 
That $\liminf_{|x|\rightarrow \infty} u=0$ follows from the fact that $\inf_{\RR^n} u=0$ and the latter is a direct consequence of the pointwise bound (see \cite{KM1}, \cite{KM2}):
$$u(x)\leq C\, {\bf W}_{p}\mu(x)\leq C [u(x)-\inf_{\RR^n} u], \qquad \forall x\in\RR^n.$$

Applying \eqref{MorreyB2} with $\nabla u_{N_j}$ in place of $\nabla u_N$, and 
using    Fatou's Lemma  as $N_j \to \infty$,   we obtain 
\begin{equation}\label{weak-morrey}
\norm{\nabla u}_{L^{p, \infty}(B(x,R))}\leq C R^{\frac{n-p}{p}}, \quad \forall \, 
x\in \RR^n, \, R>0.
\end{equation}

It is worth mentioning here that for $p \geq 2$ estimate \eqref{weak-morrey} can also be inferred from the work \cite{Min2}.   
Thus, for any ball $B=B(x,R)$, by Poincar\'e's inequality and H\"older's inequality  we find
\begin{align*}
\frac{1}{|B|}\int_{B} \left|u(x)- \frac{1}{|B|}\int_{B} u(y)dy  \right| dx &\leq C R  \frac{1}{|B|}\int_{B} |\nabla u| dx\\
&\leq C R |B|^{-\frac{1}{p}}  \norm{\nabla u}_{L^{p, \infty}(B)}\\
&\leq C.
\end{align*}

This shows that $u\in {\rm BMO}(\RR^n)$ as desired.

%%%%%%%%%%%%%%%%%%%%%%%%%%%%%%%%%%%%%%%%%%%%%%%%%%%%%%%%%%%%%%%%%%%%%%%%%%%%%%%%%%%%%%%%%%%%%%%%%%%%%%%%%%%%%%%%%%%
It remains to show that any solution $u$ to \eqref{Basic-PDE} lies in ${\rm BMO}(\RR^n)$ provided 
$\mu$ satisfies condition \eqref{mucond}. We will actually prove the 
stronger estimate \eqref{weak-morrey} (see Remark \ref{rem-1}).

\begin{lemma}\label{decaylem}
	Let  $u$ be a nonnegative $p$-superharmonic solution of $-\Delta_p u=\mu$ with $\mu$ satisfying condition \eqref{mucond}. Then $|\nabla u|\in L^{q}_{\rm loc}(\RR^n)$ provided $0<q<p$. Moreover,
	for  any $0<q<p$, $0<\epsilon<p-1$ and  any ball $B(x,R)$ we have 
	\begin{equation}\label{grad}
	\Big(R^{q-n}\int_{B(x,R)}|\nabla u|^{q} dy\Big)^{\frac{1}{q}}\leq C\, \left( [\inf_{B(x, 2R)} u]^{\frac{p-1-\epsilon}{p}} + \inf_{B(x, 2R)} u \right),
	\end{equation}
	where the constant $C$ depends on $p, q, \epsilon, n$ and the constant in condition \eqref{mucond}.
	In particular,
	\begin{equation}\label{decayMor}
	\lim_{R\rightarrow +\infty} R^{q-n}\int_{B(0,R)}|\nabla u|^{q} dy =0.
	\end{equation}
\end{lemma}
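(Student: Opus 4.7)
The plan is to derive \eqref{grad} from a weighted Caccioppoli inequality with a negative-power test function, combined with H\"older's inequality to convert from the weighted $L^p$-norm of $\nabla u$ to the $L^q$-norm, and then to close the estimate using the localized Kilpel\"ainen--Mal\'y pointwise bound together with the Morrey condition \eqref{mucond}.

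Write $m := \inf_{B(x, 2R)} u$. Fix $\epsilon \in (0, p-1)$, $\delta > 0$, and a cutoff $\eta \in C_0^\infty(B(x, 2R))$ with $\eta \equiv 1$ on $B(x, R)$ and $|\nabla \eta| \leq C/R$. Testing $-\Delta_p u = \mu$ against $\eta^p (u+\delta)^{-\epsilon}$ (admissible since it is bounded by $\delta^{-\epsilon}$) and absorbing the mixed gradient term via Young's inequality yields the Caccioppoli estimate
\[
\int_{B(x, R)} |\nabla u|^p (u+\delta)^{-1-\epsilon} dy \leq \frac{C}{R^p} \int_{B(x, 2R)} (u+\delta)^{p-1-\epsilon} dy + C \int_{B(x, 2R)} (u+\delta)^{-\epsilon} d\mu.
\]
Two features are crucial: on $B(x, 2R)$ we have $u \geq m$, so $(u+\delta)^{-\epsilon} \leq (m+\delta)^{-\epsilon}$ and the $\mu$-term is bounded by $C (m+\delta)^{-\epsilon} R^{n-p}$ via \eqref{mucond}. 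For $0 < q < p$, H\"older's inequality with exponents $p/q$ and $p/(p-q)$ applied to the factorization $|\nabla u|^q = [|\nabla u|^p (u+\delta)^{-1-\epsilon}]^{q/p} (u+\delta)^{q(1+\epsilon)/p}$ then gives
\[
\int_{B(x, R)} |\nabla u|^q dy \leq \Bigl(\int_{B(x, R)} |\nabla u|^p (u+\delta)^{-1-\epsilon} dy\Bigr)^{q/p} \Bigl(\int_{B(x, R)} (u+\delta)^{q(1+\epsilon)/(p-q)} dy\Bigr)^{(p-q)/p}.
\]

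To close the estimate, I bound the remaining $L^s$-integrals of $u+\delta$ on $B(x, 2R)$ using the local Kilpel\"ainen--Mal\'y upper bound $u(y) \leq C \inf_{B(y, \rho)} u + C\, {\bf W}_p^{C\rho}\mu(y)$ (applied with $\rho \sim R$, and using $\inf_{B(y, CR)} u \leq m$ for suitable $C$ and $y \in B(x, 2R)$), which gives $u(y) \leq C m + C\, {\bf W}_p^{CR}\mu(y)$ on $B(x, 2R)$. Local $L^s$-control of the truncated Wolff potential under \eqref{mucond} (via the Adams-type Morrey bound $\|{\bf I}_1 \mu\|_{L^{p/(p-1), \infty}(B(x, 2R))} \leq C R^{(n-p)(p-1)/p}$ of \eqref{weakmorrey} combined with standard Havin--Maz'ya--Wolff estimates) then produces bounds of the form $\int_{B(x, 2R)} (u+\delta)^s dy \leq C R^n\, g_s(m, \delta)$ with an explicit $g_s$. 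Substituting into the Caccioppoli--H\"older chain and tracking the $R$-exponents (which combine to the Morrey scaling $R^{n-q}$), then optimizing in $\delta > 0$ with a case analysis distinguishing $m \leq 1$ from $m > 1$, yields \eqref{grad}. The local integrability $|\nabla u| \in L^q_{\rm loc}(\RR^n)$ is immediate. For the decay \eqref{decayMor}, the boundary condition $\liminf_{|x|\to\infty} u = 0$ implies $m = \inf_{B(0, 2R)} u \to 0$ as $R \to \infty$, so for any fixed $\epsilon$ the RHS of \eqref{grad} vanishes.

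The main technical obstacle is obtaining sufficiently sharp $L^s$-bounds on $u+\delta$ over $B(x, 2R)$ to recover the precise exponent $q(p-1-\epsilon)/p$ in \eqref{grad}. Under \eqref{mucond} alone, the Wolff potential ${\bf W}_p \mu$ is not bounded pointwise, only in ${\rm BMO}$, so the control must come from John--Nirenberg-type averaging or a direct Morrey-type analysis of the truncated Wolff potential. The choice of $\delta$ must then carefully balance the Caccioppoli term (scaling as $(m+\delta)^{p-1-\epsilon}$), the measure term (scaling as $(m+\delta)^{-\epsilon}$), and the outer H\"older factor $(u+\delta)^{q(1+\epsilon)/p}$ to produce the precise sub-linear power $(p-1-\epsilon)/p$ of $m$ in \eqref{grad}, together with the linear term $m$ that dominates when $m$ is large.
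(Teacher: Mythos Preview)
Your overall strategy --- weighted Caccioppoli with negative power, H\"older to reach $L^q$, Kilpel\"ainen--Mal\'y pointwise bound, and Morrey control of the truncated Wolff potential --- is indeed the paper's route. However, two points need correction, and the second is a genuine gap.

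First, the $\mu$-term in your Caccioppoli inequality is misplaced. When you test $-\Delta_p u=\mu\ge 0$ against the nonnegative function $\eta^p(u+\delta)^{-\epsilon}$, the $\mu$-integral enters with a \emph{favorable} sign and can be dropped; you obtain simply
\[
\int_{B(x,R)}|\nabla u|^p(u+\delta)^{-1-\epsilon}\,dy \le \frac{C}{R^p}\int_{B(x,2R)}(u+\delta)^{p-1-\epsilon}\,dy,
\]
exactly as in \cite[Lemma~3.57]{HKM}. So the subsequent bound of the $\mu$-term via \eqref{mucond}, and the optimization in $\delta$, are unnecessary.

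Second, and more seriously, using the Kilpel\"ainen--Mal\'y bound $u\le Cm+C\,{\bf W}_p^{CR}\mu$ to control the integral $\int_{B(x,2R)}(u+\delta)^{p-1-\epsilon}$ does \emph{not} yield \eqref{grad}. Under \eqref{mucond} the Morrey/Hedberg/Adams machinery gives $\int_{B}({\bf W}_p^{CR}\mu)^{s}\le CR^n$, i.e.\ a term of order $1$ independent of $m$. Your chain then produces
\[
R^{q-n}\!\int_{B(x,R)}|\nabla u|^q\,dy \le C\big[(m+\delta)^{p-1-\epsilon}+1\big]^{q/p}\big[(m+\delta)^{\theta}+1\big]^{(p-q)/p},
\]
and no choice of $\delta$ makes the right side smaller than a fixed positive constant when $m$ is small. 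Thus the bound does not decay as $m\to 0$, \eqref{grad} fails, and \eqref{decayMor} cannot be concluded. The missing ingredient is the \emph{weak Harnack inequality} for nonnegative supersolutions (see \cite{Tru}): since $p-1-\epsilon<\tfrac{n(p-1)}{n-p}$, it gives directly
\[
\int_{B(x,2R)}u^{p-1-\epsilon}\,dy \le C R^n\,m^{p-1-\epsilon},
\]
with no additive constant. The paper uses weak Harnack for this first factor and reserves the K--M bound (plus Hedberg/Adams) for the second factor, whose exponent $\theta=(1+\epsilon)q/(p-q)$ may exceed the weak Harnack range. Combining these yields $R^{q-n}\!\int|\nabla u|^q\le C\,m^{(p-1-\epsilon)q/p}(1+m^{(1+\epsilon)q/p})$, which is precisely \eqref{grad}.

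A minor further point: your decay argument invokes the boundary condition $\liminf_{|x|\to\infty}u=0$, which is not part of the lemma's hypotheses. The paper instead observes that \eqref{grad} applies equally to $\tilde u=u-\inf_{\RR^n}u$, for which $\inf_{B(0,2R)}\tilde u\to 0$ automatically, and $\nabla\tilde u=\nabla u$.
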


\begin{proof}
	Let $u_k=\min\{u, k\}$, $k=1,2, \dots$ Then $u_k\in W^{1,p}_{\rm loc}(\RR^n)$ is a supersolution in $\RR^n$ and hence the weak Harnack 
	inequality  \cite{Tru} implies that
	$$\Big(\frac{1}{|B(x,R)|}\int_{B(x,R)}u_{k}^{s}dy \Big)^{\frac{1}{s}}\leq C\, \inf_{B(x,R)}u_k \leq C\, \inf_{B(x,R)}u$$
	for $0<s<\frac{n(p-1)}{n-p}$. Thus letting $k\rightarrow\infty$ we obtain
	\begin{equation}\label{whn}
	\Big(\frac{1}{|B(x,R)|}\int_{B(x,R)}u^{s}dy \Big)^{\frac{1}{s}}\leq C\, \inf_{B(x,R)}u.
	\end{equation}
	
	To continue, we recall the following result for $p$-supersolutions from \cite[Lemma 3.57]{HKM}:
	
	\textit{ For any nonnegative $p$-supersolution $v$ in an open set $\Om\subset\RR^n$, any $\epsilon>0$, and any 
		function $\varphi\in C_0^\infty(\Omega)$, it holds that 	
		\begin{equation}\label{357}
		\int_{\Om} |\nabla v|^p v^{-1-\epsilon} |\varphi|^p dx \leq (p/\epsilon)^p \int_{\Om} v^{p-1-\epsilon} |\nabla \varphi|^p dx.
		\end{equation}
	}
	Now let $0<q<p$ and fix an $\epsilon$ such that  $0<\epsilon<p-1$. Applying H\"{o}lder's inequality, and then using \eqref{357} with $v=u_k$ and an appropriate cut-off function $\varphi$ 
	supported in $B(x, 2R)$ 
	such that $\varphi=1$ on $B(x, R)$ and $|\nabla \varphi |\le C R^{-1}$, 
	we estimate
	\begin{eqnarray*}
		&&\int_{B(x,R)}|\nabla u_{k}|^q dy=\int_{B(x,R)}|\nabla u_{k}|^q u_{k}^{-(1+\epsilon)
			q/p}u_{k}^{(1+\epsilon)q/p}dy\\
		&\leq& \Big(\int_{B(x,R)}|\nabla u_{k}|^{p}u_{k}^{-1-\epsilon} dy \Big)^{q/p}
		\Big(\int_{B(x,R)}u_{k}^{(1+\epsilon)q/(p-q)}dy\Big)^{(p-q)/p}\\
		&\leq& C R^{-q} \Big(\int_{B(x,2R)}u^{p-1-\epsilon} dy \Big)^{q/p}
		\Big(\int_{B(x,R)}u^{(1+\epsilon)q/(p-q)}dy\Big)^{(p-q)/p}.
	\end{eqnarray*}
	
	Thus it follows from \eqref{whn} that 
	\begin{align}\label{nauq}
	\int_{B(x,R)}|\nabla u_{k}|^q dy&\leq  C R^{-q} \Big(R^n [\inf_{B(x,2R)} u]^{p-1-\epsilon} \Big)^{q/p} \times\\
	&\qquad \qquad \times \Big(\int_{B(x,R)}u^{(1+\epsilon)q/(p-q)}dy\Big)^{(p-q)/p}.\nonumber
	\end{align}
	
	On the other hand, by \cite[Theorem 1.6]{KM2} we have 
	\begin{align*}
	u(y) &\leq C \inf_{B(y, 3R)} u + C  \int_{0}^{6R} \left(\frac{\mu(B(y, t))}{t^{n-p}}\right)^{\frac{1}{p-1}} \frac{dt}{t}\\
	&\leq C \inf_{B(x, 2R)} u + C  \int_{0}^{6R} \left(\frac{\mu(B(y, t))}{t^{n-p}}\right)^{\frac{1}{p-1}} \frac{dt}{t},
	\end{align*}
	provided $y\in B(x,R)$. Thus,
	
	\begin{align}\label{usth}
	\int_{B(x,R)} & u^{(1+\epsilon)q/(p-q)}dy\leq C R^n [\inf_{B(x,2R)} u]^{(1+\epsilon)q/(p-q)} \\
	& + \, C \int_{B(x,R)} \left[\int_{0}^{6R} \left(\frac{\mu(B(y, t))}{t^{n-p}}\right)^{\frac{1}{p-1}} \frac{dt}{t} \right]^{(1+\epsilon)q/(p-q)}dy. \nonumber
	\end{align}
	
	Note that for $y\in B(x,R)$   by a Hedberg type inequality  (see \cite[Section 3.1]{AH}) we have 
	%\begin{align*}
	%\int_{0}^R \left(\frac{\mu(B(y, t))}{t^{n-p}}\right)^{\frac{1}{p-1}} \frac{dt}{t} &= \int_{0}^R \left(\frac{\mu_{B(x,2R)}(B(y, t))}{t^{n-p}}\right)^{\frac{1}{p-1}} \frac{dt}{t} \\
	%&\leq R^{\frac{p-\alpha}{p-1}} \int_{0}^R \left(\frac{\mu_{B(x,2R)}(B(y, t))}{t^{n-\alpha}}\right)^{\frac{1}{p-1}} \frac{dt}{t},
	%\end{align*}
	% provided $0<\alpha<p$. 
	% 
	
	\begin{align*}
	\int_{0}^{6R} & \left(\frac{\mu(B(y, t))}{t^{n-p}}\right)^{\frac{1}{p-1}} \frac{dt}{t} = \int_{0}^{6R} \left(\frac{\mu_{B(x,7R)}(B(y, t))}{t^{n-p}}\right)^{\frac{1}{p-1}} \frac{dt}{t} \\
	&\leq C \mu(B(x,7R))^{\frac{p-\alpha}{(n-\alpha)(p-1)}} \, {\bf M}_\alpha(\mu_{B(x,7R)})^{\frac{n-p}{(n-\alpha)(p-1)}}\\
	&\leq C \mu(B(x,7R))^{\frac{p-\alpha}{(n-\alpha)(p-1)}} \, {\bf I}_\alpha(\mu_{B(x,7R)})^{\frac{n-p}{(n-\alpha)(p-1)}},
	\end{align*}
	provided $0<\alpha<p$. Here ${\bf M}_\alpha$, $\alpha\in(0,n)$, is the fractional maximal function of order $\alpha$ defined for a  measure 
	$\nu\in M^{+}(\RR^n)$ by 
	$${\bf M}_\alpha \nu (x):=\sup_{r>0} \frac{\nu(B(x,r))}{r^{n-\alpha}}, \qquad x\in\RR^n.$$

	We now set $\theta=(1+\epsilon)q/(p-q)$. Then the above bound and \eqref{mucond} gives
	\begin{align*}
	\int_{B(x,R)} &\left[\int_{0}^{6R} \left(\frac{\mu(B(y, t))}{t^{n-p}}\right)^{\frac{1}{p-1}} \frac{dt}{t} \right]^\theta dy \\
	&\leq C  R^{\frac{(n-p)(p-\alpha)\theta}{(n-\alpha)(p-1)}} \int_{B(x,R)} {\bf I}_\alpha(\mu)^{\frac{\theta (n-p)}{(n-\alpha)(p-1)}} dy.
	\end{align*}

	% We now set $\theta=(1+\epsilon)q/(p-q)$ and without loss of generality we may assume that  $\theta>p-1$.
	% Then the above bound gives 
	%\begin{align*}
	%I&\leq C  R^{(\frac{p-\alpha)\theta}{p-1}} \int_{B(x,R)} \left[\int_{0}^R \left(\frac{\mu_{B(x,2R)}(B(y, t))}{t^{n-\alpha}}\right)^{\frac{1}{p-1}} \frac{dt}{t} \right]^{\theta} dy\\
	%&\leq C R^{(\frac{p-\alpha)\theta}{p-1}} \int_{B(x,4R)} [{\bf I}_{\alpha}\mu_{B(x,2R)}]^{\frac{\theta}{p-1}} dy. 
	%\end{align*}
	
	We next choose $0<\alpha<p$ such that    
	$$\frac{p}{p-\alpha}>\frac{\theta(n-p)}{(n-\alpha)(p-1)},$$
	and apply H\"older's inequality to get
	\begin{align}\label{I}
	\int_{B(x,R)} &\left[\int_{0}^{6R} \left(\frac{\mu(B(y, t))}{t^{n-p}}\right)^{\frac{1}{p-1}} \frac{dt}{t} \right]^\theta dy \\
	&\leq  C R^{\frac{(n-p)(p-\alpha)\theta}{(n-\alpha)(p-1)}} R^{n-\frac{n \theta(n-p)(p-\alpha)}{p(n-\alpha)(p-1)}} \norm{{\bf I}_{\alpha}\mu}^{\frac{\theta(n-p)}{(p-1)(n-\alpha)}}_{L^{\frac{p}{p-\alpha},\infty}(B(x,R))}\nonumber \\
	&  \leq C R^n, \nonumber
	\end{align}
	where in the last bound we used a result of \cite{Ada}: 
	$$\norm{{\bf I}_{\alpha}\mu}_{L^{\frac{p}{p-\alpha},\infty}(B(x,R))}\leq C R^{(n-p)\frac{p-\alpha}{p}}. $$
	
	At this point, we  plug  estimate \eqref{I}  into \eqref{usth} to obtain
	$$\int_{B(x,R)}  u^{(1+\epsilon)q/(p-q)}dy\leq C R^n(1+ [\inf_{B(x,2R)} u]^{(1+\epsilon)q/(p-q)} ).$$
	
	In view of \eqref{nauq}, this yields 
	$$R^{q-n}\int_{B(x,R)} |\nabla u_k|^q dy \leq C [\inf_{B(x,2R)} u]^{(p-1-\epsilon)q/p}\left(1+ [\inf_{B(x,2R)} u]^{(1+\epsilon)q/p}\right).$$

	Now  letting $k\rightarrow\infty$ we obtain estimate \eqref{grad}.
	
	Finally, to obtain the decay  \eqref{decayMor}, we observe that estimate \eqref{grad} also holds if $u$ is replaced by $\tilde{u}:=u-\inf_{\RR^n} u$ and that 
	$\inf_{\RR^n}\tilde{u}=0$.
\end{proof}

\begin{remark}
	For $0<q<\frac{n(p-1)}{n-1}$, Lemma \ref{decaylem}  holds without assuming condition \eqref{mucond} on $\mu$   (see \cite[Theorem 7.46]{HKM}). Moreover, 
	the first term on the right-hand side of 	\eqref{grad} can be dropped in this case.
\end{remark}

The next lemma is a local interior version of an analogous  result obtained in 
\cite[Proposition 4.4]{AP}. We use a modification of its proof  based mainly on  \cite[Theorem 2.3]{AP} and \cite[Lemma 2.8]{AP}. Henceforth, we denote by ${\bf M}$  the Hardy-Littlewood maximal operator.

\begin{lemma}\label{first-approx-lorentz}
	There exist constants $A=A(n,p)>1$ sufficiently large and  $\delta_0=\delta_0(n,p)\in (0,p-1)$ sufficiently small such that the following 
	holds for any $T>1$, $\lambda > 0$, and $\delta\in (0,\delta_0)$. Fix a ball $B_0=B(z_0, R_0)$ and let  $u$ be a  solution of $-\Delta_p u={\rm div}\, (|{\bf f}|^{p-2} {\bf f})$ in $2B_0$.  Assume that for some
	ball $B (y,\rho)$ with $\rho \leq R_0/8$, we have
	\beas
	B(y, \rho) \cap B_0 \cap \{x \in \RR^n : {\bf M} (\chi_{2B_0} &|\nabla u|^{p-\delta})^{\frac{1}{p-\delta}}(x) \leq \lambda \} \cap \\
	&\{{\bf M}(\chi_{2B_0} |\bff|^{\pdl})^{\frac{1}{p-\delta}} \leq \ep(T)\lambda \} \neq \emptyset, 
	\enas
	with $\ep(T) = T^{\frac{-2\delta_0}{\pdl}\,\max\left\{1, \frac{1}{p-1}\right\}}$.
	Then 
	\begin{equation*}%\label{largedecay}
	|\{ x \in \RR^n : {\bf M} (\chi_{2B_0}|\nabla u|^{\pdl})^{\frac{1}{\pmd}}(x) > A T \lambda \} \cap B(y,\rho)|  <  H \, |B(y,\rho)| ,
	\end{equation*}
	where
	\beas
	H =H(T,\delta)=  T^{-(p+\delta_0)} + \delta^{(p-\delta)\min\left\{1, \frac{1}{p-1}\right\}}. 
	\enas 
\end{lemma}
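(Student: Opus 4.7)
The proof follows the Caffarelli--Peral good-$\lambda$ scheme adapted to the $p$-Laplacian by \cite{AP}. The plan is to compare $u$ on a mildly enlarged ball with its $p$-harmonic replacement $w$, bound $\nabla w$ pointwise via the interior Lipschitz estimate \cite[Lemma 2.8]{AP} and $\nabla u-\nabla w$ in $L^{\pdl}$-average via the stability estimate \cite[Theorem 2.3]{AP}, and then decompose the maximal-function level set on $B(y,\rho)$ accordingly.

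\textbf{Setup.} Put $B_{1}:=B(y,8\rho)\subset 2B_{0}$, the inclusion being a consequence of $B(y,\rho)\cap B_{0}\ne\emptyset$ and $\rho\le R_{0}/8$. Let $w\in u+W^{1,p}_{0}(B_{1})$ solve $-\Delta_{p}w=0$ in $B_{1}$. Pick $x_{0}\in B(y,\rho)\cap B_{0}$ provided by the hypothesis, so that ${\bf M}(\chi_{2B_{0}}|\nabla u|^{\pdl})(x_{0})\le\lambda^{\pdl}$ and ${\bf M}(\chi_{2B_{0}}|\bff|^{\pdl})(x_{0})\le\ep(T)^{\pdl}\lambda^{\pdl}$. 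Since $B_{1}\subset B(x_{0},10\rho)\subset 2B_{0}$, averaging over a ball centered at $x_{0}$ yields
\begin{equation*}
\fintegral_{B_{1}}|\nabla u|^{\pdl}\,dx\le C\lambda^{\pdl},\qquad \fintegral_{B_{1}}|\bff|^{\pdl}\,dx\le C\,\ep(T)^{\pdl}\lambda^{\pdl}.
\end{equation*}

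\textbf{Comparison, Lipschitz bound, and level-set split.} The stability estimate \cite[Theorem 2.3]{AP}, combined with the averages above, yields a bound of the form
\begin{equation*}
\fintegral_{B_{1}}|\nabla u-\nabla w|^{\pdl}\,dx\le C\bigl[\ep(T)^{\pdl}+\delta^{(\pdl)\min\{1,1/(p-1)\}}\bigr]\lambda^{\pdl},
\end{equation*}
where the first summand records the direct $L^{\pdl}$ dependence on $|\bff|$ and the second arises by H\"older-interpolating the natural $L^{p}$-comparison $\fintegral|\nabla(u-w)|^{p}\le C\fintegral|\bff|^{p}$ (and its inhomogeneous analogue for $1<p<2$) against the Meyers--Gehring higher integrability $|\nabla u|\in L^{\ppd}_{\rm loc}$. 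The Lipschitz estimate \cite[Lemma 2.8]{AP} for $w$ then gives $\|\nabla w\|_{L^{\infty}(B(y,4\rho))}^{\pdl}\le C_{0}\lambda^{\pdl}$ after the triangle inequality. A standard exterior-domain estimate using $x_{0}$ controls ${\bf M}(\chi_{2B_{0}\setminus B_{1}}|\nabla u|^{\pdl})$ on $B(y,\rho)$ by $C\lambda^{\pdl}$, and a two-scale argument built on the Lipschitz bound shows ${\bf M}(\chi_{B_{1}}|\nabla w|^{\pdl})\le C_{0}\lambda^{\pdl}$ there as well. Choosing $A=A(n,p)$ large enough absorbs these additive constants, so the bad set sits inside $\{x\in B(y,\rho):{\bf M}(\chi_{B_{1}}|\nabla u-\nabla w|^{\pdl})(x)>c(AT\lambda)^{\pdl}\}$, whose measure is bounded via weak-type $(1,1)$ and the stability estimate by a constant times $|B(y,\rho)|[\ep(T)^{\pdl}+\delta^{(\pdl)\min\{1,1/(p-1)\}}]/T^{\pdl}$. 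Substituting the definition of $\ep(T)$ together with the identity $\min\{1,1/(p-1)\}\cdot\max\{1,1/(p-1)\}=1/(p-1)$ and the constraint $\delta<\delta_{0}$ then recovers the desired form of $H$.

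\textbf{Main obstacle.} The delicate point is producing the $\delta^{(\pdl)\min\{1,1/(p-1)\}}$ gain in the stability estimate. For $p\ge 2$ the natural $L^{p}$ stability is linear in $|\bff|^{p}$ and the $\delta$-power is extracted by H\"older-interpolating $L^{p}$ with $L^{\ppd}$ using the higher integrability of $|\nabla u|$, giving the exponent $\pdl$. For $1<p<2$ the comparison inequality is inhomogeneous in $|\nabla u|$ and $|\bff|$, with a natural $(p-1)$-power on the source, which manifests in the min-exponent $1/(p-1)$. Simultaneously tracking the regimes $T\to\infty$ and $\delta\to 0$ so that the two terms of $H$ cleanly separate, and packaging the case-splitting through the weak-type argument, is the main technical hurdle.
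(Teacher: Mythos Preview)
Your proposal is correct and follows exactly the approach the paper indicates: the paper does not spell out a proof of this lemma but states that it is a local interior version of \cite[Proposition 4.4]{AP}, obtained by a modification of that proof based on the stability estimate \cite[Theorem 2.3]{AP} and the interior Lipschitz estimate \cite[Lemma 2.8]{AP}. Your outline---$p$-harmonic replacement on an enlarged ball, Lipschitz bound for $\nabla w$, $L^{\pdl}$-comparison with the $\delta$-gain from higher integrability, and the weak-type $(1,1)$ step---is precisely this modification, and your bookkeeping for how $\ep(T)$ and the $\delta$-term combine to produce $H(T,\delta)$ checks out.
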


With this, we can now apply \cite[Lemma 4.1]{AP} and Lemma  \ref{first-approx-lorentz} above to get the following result. Its proof  is similar to that 
of \cite[Lemma 4.6]{AP}.

\begin{lemma}\label{technicallemma1} Let $A=A(n,p)$ and $\delta_0=\delta_0(n,p)$ be as in Lemma \ref{first-approx-lorentz}.
	The following holds for any $T > 2$ and $\delta\in (0, \delta_0)$. 
	Fix a ball $B_0=B(z_0, R_0)$ and let  $u$ be a  solution of $-\Delta_p u={\rm div}\, (|{\bf f}|^{p-2} {\bf f})$ in $2B_0$. Suppose that there
	exists $N>0$ such that 
	\begin{equation*}%\label{hypo1bdry}
	|\{x\in \RR^n: {\bf M} (\chi_{2B_0}|\nabla u|^{\pmd})^{\frac{1}{p-\delta}}(x) > N \}| < H | \tfrac{1}{8} B_{0}|.
	\end{equation*}
	Then for any  integer $k\geq 0$ we have 
	\beas
	&|\{x\in B_0: {\bf M}(\chi_{2B_0}|\nabla u|^{\pmd})^{\frac{1}{p-\delta}}(x)> N(AT) ^{k+1}\}|\\
	&\qquad\leq c(n)\, H\,  |\{x\in B_0: {\bf M}(\chi_{2B_0}|\nabla u|^{\pmd})^{\frac{1}{p-\delta}}(x) > N(AT) ^{k}\}|\\
	& \qquad \qquad+\, c(n)\, |\{ x\in B_0 : {\bf M}(\chi_{2B_0}|\bff|^{\pmd})^{\frac{1}{p-\delta}}(x) > \epsilon(T) N(AT)^{k}\}|.
	\enas
	Here $\epsilon(T)$ and $H=H(T,\delta)$ are as defined in Lemma \ref{first-approx-lorentz}.
\end{lemma}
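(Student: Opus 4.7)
The plan is to run a Krylov--Safonov style good-$\lambda$ argument at each level $k$, with Lemma \ref{first-approx-lorentz} supplying the required local density decay on every candidate ball. Introduce
\begin{align*}
E_k &:= \{x \in B_0 : {\bf M}(\chi_{2B_0}|\nabla u|^{p-\delta})^{1/(p-\delta)}(x) > N(AT)^k\},\\
F_k &:= \{x \in B_0 : {\bf M}(\chi_{2B_0}|{\bf f}|^{p-\delta})^{1/(p-\delta)}(x) > \epsilon(T) N(AT)^k\}.
\end{align*}
Since $AT > 2$, the sets are nested: $E_{k+1} \subset E_k \subset \cdots \subset E_0$, and the standing hypothesis immediately gives $|E_k| < H |\frac{1}{8}B_0|$ for every $k \ge 0$. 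Hence each $E_{k+1}$ enjoys the smallness of measure inside $B_0$ that is the entry requirement for the covering lemma \cite[Lemma 4.1]{AP}.

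Next I would verify the combinatorial hypothesis needed by that covering lemma: for every ball $B(y,\rho)$ with $\rho \le R_0/8$, whenever $|E_{k+1} \cap B(y,\rho)| \ge H\, |B(y,\rho)|$, it must hold that $B(y,\rho) \cap B_0 \subset E_k \cup F_k$. Arguing by contrapositive, suppose some $x_0 \in B(y,\rho) \cap B_0$ satisfies both ${\bf M}(\chi_{2B_0}|\nabla u|^{p-\delta})^{1/(p-\delta)}(x_0) \le N(AT)^k$ and ${\bf M}(\chi_{2B_0}|{\bf f}|^{p-\delta})^{1/(p-\delta)}(x_0) \le \epsilon(T) N(AT)^k$. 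Then Lemma \ref{first-approx-lorentz}, applied at the level $\lambda = N(AT)^k$, yields
$$|\{{\bf M}(\chi_{2B_0}|\nabla u|^{p-\delta})^{1/(p-\delta)} > AT \cdot N(AT)^k\} \cap B(y,\rho)| < H\, |B(y,\rho)|,$$
that is, $|E_{k+1} \cap B(y,\rho)| < H\, |B(y,\rho)|$, contradicting the assumed density bound.

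With both hypotheses of \cite[Lemma 4.1]{AP} in hand, that covering lemma outputs
$$|E_{k+1}| \le c(n)\, H\, |E_k \cup F_k| \le c(n)\, H\, |E_k| + c(n)\, |F_k|,$$
which is precisely the statement of Lemma \ref{technicallemma1}. The principal obstacle will be the scale bookkeeping: one must ensure that the Vitali-selected balls obey $\rho_i \le R_0/8$ (so that Lemma \ref{first-approx-lorentz} genuinely applies at each step) while still meeting $B_0$, and that the critical density threshold used in \cite[Lemma 4.1]{AP} is calibrated to the exact constant $H = H(T,\delta)$ produced by Lemma \ref{first-approx-lorentz}. This calibration is already encoded in the AP covering lemma, so the remaining task is the quantitative compatibility check above; the overall argument follows the same template as \cite[Lemma 4.6]{AP}.
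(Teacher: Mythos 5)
Your argument is correct and is exactly the route the paper intends: combine the density-decay Lemma \ref{first-approx-lorentz} (via the contrapositive) with the Vitali-type covering lemma \cite[Lemma 4.1]{AP}, with $E_{k+1}$ playing the role of the ``small'' set and $E_k \cup F_k$ the ``large'' set. The only point worth pinning down is the final passage from $c(n)\,H\,|E_k\cup F_k|$ to $c(n)\,H\,|E_k| + c(n)\,|F_k|$, which silently uses $H\le 1$; this is harmless because $\delta_0$ in Lemma \ref{first-approx-lorentz} is taken small enough (and $T>2$, $p>1$) to guarantee $H(T,\delta)<1$ for all admissible $T$, $\delta$, but it deserves a one-line remark in a written-out proof.
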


We are now in a position to obtain a local $L^{p, \infty}$ estimate for the gradient.

\begin{theorem}\label{Lorentz-p-thick}
	Let $\mu\in M^{+}(\RR^n)$  and  let $u$ be a solution of $-\Delta_p u=\mu$ in $\RR^n$. Then  for any ball $B_0=B(z_0, R_0)\subset\RR^n$ we have
	\begin{eqnarray}\label{B0bound}
	\|\nabla u\|_{L^{p, \infty}(B_0))}&\leq& C\, |B_0|^{\frac{1}{p}}  \left(\frac{1}{|2B_0|}\int_{2B_0} |\nabla u|^{p-\delta} dx\right)^{\frac{1}{p-\delta}}   \\
	&& +\,  C\, \norm{[{\rm\bf I}_1(\chi_{2B_0}\mu)]^{\frac{1}{p-1}}}_{L^{p,\infty}(B_0)},\nonumber
	\end{eqnarray}
	with  a constant    $C=C(n,p)>0$ and a constant  $\delta=\delta(n,p)\in (0, p-1)$. 
\end{theorem}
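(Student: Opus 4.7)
The plan is to rewrite the measure-data equation on $2B_0$ in the divergence form required by Lemma \ref{first-approx-lorentz}, and then iterate Lemma \ref{technicallemma1} in the usual good-$\lambda$ fashion. Let $v$ be the Newtonian potential of $\chi_{2B_0}\mu$, so that $-\Delta v = \chi_{2B_0}\mu$ in $\RR^n$ and $|\nabla v|\le C\,{\bf I}_1(\chi_{2B_0}\mu)$ pointwise. Define ${\bf f}$ by $|{\bf f}|^{p-2}{\bf f} = -\nabla v$, which gives $|{\bf f}| = |\nabla v|^{1/(p-1)} \le C\,[{\bf I}_1(\chi_{2B_0}\mu)]^{1/(p-1)}$. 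In $2B_0$ one then has $-\Delta_p u = \mu = -\Delta v = {\rm div}(|{\bf f}|^{p-2}{\bf f})$, so $u$ fits the hypothesis of the preceding lemmas. We may assume $\int_{2B_0}|\nabla u|^{p-\delta}\,dx<\infty$, since otherwise the claimed estimate is vacuous.

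Next, fix the parameters. Choose $T = T(n,p)>2$ so large that $c(n)(AT)^p\,T^{-(p+\delta_0)}\le \frac14$, and then $\delta = \delta(n,p)\in(0,\delta_0)$ so small that $c(n)(AT)^p\,\delta^{(p-\delta)\min\{1,1/(p-1)\}}\le \frac14$; hence $c(n)(AT)^p\,H\le \frac12$. By the weak-$(1,1)$ bound for ${\bf M}$, the threshold
\[ N := c_0(n,p)\,H^{-1/(p-\delta)}\Bigl(\frac{1}{|2B_0|}\int_{2B_0}|\nabla u|^{p-\delta}\,dx\Bigr)^{1/(p-\delta)} \]
satisfies $\bigl|\{x\in\RR^n: {\bf M}(\chi_{2B_0}|\nabla u|^{p-\delta})^{1/(p-\delta)}(x) > N\}\bigr| < H\,|\tfrac18 B_0|$. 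Lemma \ref{technicallemma1} then gives, for every $k\ge0$,
\[ |E_{k+1}|\le c(n)\,H\,|E_k|+c(n)\,|F_k|, \]
where $E_k = \{x\in B_0: {\bf M}(\chi_{2B_0}|\nabla u|^{p-\delta})^{1/(p-\delta)}(x) > N(AT)^k\}$ and $F_k$ is the analogous level set for ${\bf f}$ at threshold $\epsilon(T)N(AT)^k$.

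Multiplying the recursion by $(N(AT)^{k+1})^p$ and using $c(n)(AT)^p\,H\le\frac12$ to absorb the first term on the right, a telescoping argument gives
\[ \sup_{k\ge0}(N(AT)^k)^p\,|E_k| \le C\,N^p\,|B_0|+C\,\epsilon(T)^{-p}\sup_{k\ge0}(\epsilon(T)N(AT)^k)^p\,|F_k|. \]
Since for $\lambda\in[N(AT)^k,N(AT)^{k+1})$ one has $\{{\bf M}(\chi_{2B_0}|\nabla u|^{p-\delta})^{1/(p-\delta)}>\lambda\}\cap B_0\subset E_k$, and trivially $\lambda|\{\cdots>\lambda\}\cap B_0|^{1/p}\le N|B_0|^{1/p}$ for $\lambda\le N$, this yields
\[ \|{\bf M}(\chi_{2B_0}|\nabla u|^{p-\delta})^{1/(p-\delta)}\|_{L^{p,\infty}(B_0)} \le C\,N\,|B_0|^{1/p} + C\,\|{\bf M}(\chi_{2B_0}|{\bf f}|^{p-\delta})^{1/(p-\delta)}\|_{L^{p,\infty}(B_0)}. \]
The pointwise bound $|\nabla u|\le {\bf M}(\chi_{2B_0}|\nabla u|^{p-\delta})^{1/(p-\delta)}$ a.e.\ on $B_0$ controls the left side of \eqref{B0bound}. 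The weak $(p/(p-\delta),p/(p-\delta))$ bound for ${\bf M}$ together with $|{\bf f}|\le C[{\bf I}_1(\chi_{2B_0}\mu)]^{1/(p-1)}$ and the fact that $\chi_{2B_0}|{\bf f}|^{p-\delta}$ is supported in $2B_0$ (so that averages over balls much larger than $B_0$ give no essential contribution on $B_0$) reduce the second term to $C\,\|[{\bf I}_1(\chi_{2B_0}\mu)]^{1/(p-1)}\|_{L^{p,\infty}(B_0)}$. Substituting the value of $N$ finishes the proof.

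The main obstacle is the delicate parameter calibration: $T$ must be selected first, driven only by $n$ and $p$, to tame the $T^{-(p+\delta_0)}$ part of $H$ against the amplifying factor $(AT)^p$; only then can $\delta=\delta(n,p)$ be chosen small enough to control the $\delta^{(p-\delta)\min\{1,1/(p-1)\}}$ contribution, ensuring the geometric series actually converges with the iteration constant below $1/2$. A secondary technical point is the reduction of $\|{\bf M}(\chi_{2B_0}|{\bf f}|^{p-\delta})^{1/(p-\delta)}\|_{L^{p,\infty}(B_0)}$ back to the norm of $[{\bf I}_1(\chi_{2B_0}\mu)]^{1/(p-1)}$ on $B_0$ rather than on $2B_0$, which relies on the compact support of $\chi_{2B_0}\mu$ and the decay of the maximal function on balls concentric with $B_0$ of radii much larger than $R_0$.
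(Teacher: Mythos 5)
Your proposal follows essentially the same route as the paper's own proof: represent $\mu$ in $2B_0$ as $\mathrm{div}(|{\bf f}|^{p-2}{\bf f})$ with $|{\bf f}|\le C[{\bf I}_1(\chi_{2B_0}\mu)]^{1/(p-1)}$, fix $T$ then $\delta$ so that the iteration constant $c(n)(AT)^p H(T,\delta)$ drops below $1/2$, choose $N$ via the weak $(1,1)$ bound, iterate Lemma \ref{technicallemma1}, absorb, and translate maximal-function level sets into the $L^{p,\infty}$ norm. The only cosmetic deviations are that you use the whole-space Newtonian potential of $\chi_{2B_0}\mu$ where the paper uses the Green potential on $2B_0$ (both yield the same pointwise Riesz bound on the gradient), and you package the good-$\lambda$ telescoping by multiplying the recursion by $(N(AT)^{k+1})^p$ rather than introducing the truncated suprema $L_m$; these are equivalent. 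The parameter calibration, choice of $N$, and the final reduction to the Riesz-potential term all match the paper.
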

\begin{proof}
	Let $B_0=B(z_0, R_0)$ and   $\delta_0$ be as in Lemma \ref{first-approx-lorentz}.
	For $T>2$ and  $\delta\in(0,\delta_0)$ to be determined,  we claim that there exists $N>0$  such that 
	\begin{equation*}
	|\{ x\in \RR^n: {\rm\bf M}(\chi_{2B_0}|\nabla u|^{p-\delta})^{\frac{1}{2-\delta}}(x) > N 
	\}| < H \, |\tfrac{1}{8}B_0|,
	\end{equation*}
	where $H =H(T,\delta)=  T^{-(p-\delta_0)} + \delta^{(p-\delta)\min\left\{1, \frac{1}{p-1}\right\}}$ (as in Lemma \ref{first-approx-lorentz}).
	
	This can be done by using the weak type $(1, 1)$ estimate for the maximal function and choosing $N>0$ such that 
	\begin{equation}\label{Neqn1}
	\frac{C(n)}{N^{p-\delta}}\int_{2 B_0}|\nabla u|^{p-\delta}dx =   H | \tfrac{1}{8} B_0|
	\end{equation}
	provided the integral above is non-zero, which we may assume.
	
	Let  $A>1$ and $\epsilon(T)>0$ be as in Lemma \ref{first-approx-lorentz}. Set 
	\begin{equation*}
	L = \sup_{k \geq 1} (AT)^{k} |\{ x\in B_0: {\rm\bf M}(\chi_{2B_0}|\nabla u|^{p-\delta})^{\frac{1}{p-\delta}}(x) > N(AT)^{k} \}|^{\frac{1}{p}}.
	\end{equation*}
	
	We have 
	\begin{equation}\label{SSS1}
	\norm{{\rm\bf M}(\chi_{2B_0}|\nabla u/N|^{p-\delta})^{\frac{1}{p-\delta}}}_{L^{p, \infty}(B_0)} \leq AT\, (|B_0|^{\frac{1}{p}} + L).
	\end{equation}

	We now set, for $m=1,2, \dots$, 
	\begin{equation*}
	L_m = \sup_{1\leq k \leq m} (AT)^{k} |\{ x\in B_0: {\rm\bf M}(\chi_{2B_0}|\nabla u|^{p-\delta})^{\frac{1}{p-\delta}}(x) > N(AT)^{k} \}|^{\frac{1}{p}},
	\end{equation*}
	and note that 
	\begin{equation}\label{limL}
	\lim_{m\rightarrow\infty}L_m =L.
	\end{equation}
	
	For any vector field ${\bf f}$ such that ${\rm div}\, (|{\bf f}|^{p-2} {\bf f})=\mu$ in $2B_0$, by  Lemma \ref{technicallemma1} we find 
	\begin{eqnarray*}
		L_m &\leq & C \sup_{1\leq k \leq m} \,  (AT)^{k} H(T,\delta)^{\frac{1}{p}} \times \\
		&& \qquad \qquad \times |\{ x\in B_0: {\rm\bf M}(\chi_{2B_0}|\nabla u|^{p-\delta})^{\frac{1}{p-\delta}}(x) > N(AT)^{k-1} \}|^{\frac{1}{p}}\\
		&& +\, C \sup_{1\leq k \leq m}\, (AT)^{ k}  \times \\ 
		&& \qquad \qquad \times  |\{ x\in B_0 : [{\rm\bf M}(\chi_{2B_0} |{\bf f}|^{p-\delta})]^{\frac{1}{p-\delta}} > \epsilon(T) N(AT)^{k-1}\}|^{\frac{1}{p}}\\
		&\leq& C\, (AT) H(T,\delta)^{\frac{1}{p}} (L_m+ |B_0|^{\frac{1}{p}}) \\
		&& \qquad \qquad   +\,  C_1(T,\delta)\,  \norm{[{\rm\bf M}(\chi_{2B_0} (|{\bf f}|/N)^{p-\delta})]^{\frac{1}{p-\delta}}}_{L^{p,\infty}(B_0)}.
	\end{eqnarray*}

	By the boundedness property of ${\bf M}$, this yields
	\begin{equation*}
	L_m 		\leq C\, (AT) H(T,\delta)^{\frac{1}{p}} (L_m+ |B_0|^{\frac{1}{p}})  + C C_1(T, \delta) \norm{ {\bf f}/N}_{L^{p, \infty}(2B_0)}.
	\end{equation*}
	
	We next choose $T$ sufficiently large and $\delta$  sufficiently small so that 
	$$C\, (AT) H(T,\delta)^{\frac{1}{p}}\leq 1/2$$
	and thus deduce from the above bound and \eqref{limL} that 
	\begin{equation*}
	\tfrac{1}{2} L 	\leq 	\tfrac{1}{2} |B_0|^{\frac{1}{p}}  + C \norm{ {\bf f}/N}_{L^{p, \infty}(2B_0)}.
	\end{equation*}

	In view of  \eqref{SSS1} and \eqref{Neqn1} this gives 
	\begin{eqnarray}\label{fanaleF}
	\lefteqn{ \|\nabla u\|_{L^{p,\infty}(B_0)}  \leq C |B_0|^{\frac{1}{p}} N + C  \norm{{\bf f}}_{L^{p, \infty}(2B_0)}}\\
	&\leq&  C |B_0|^{\frac{1}{p}} \left(\frac{1}{|2B_0|} \int_{2B_{0}} |\nabla u|^{p-\delta} dx\right)^{\frac{1}{p-\delta}} + C  \norm{{\bf f}}_{L^{p, \infty}(2B_0)}.\nonumber
	\end{eqnarray}

	Finally, we write $\mu={\rm div}\, {\bf g}$ in $2B_0$, where $${\bf g}=-\nabla \int_{2B_0} G(x,y) d\mu(y)$$ 
	and $G(x,y)$ is the Green function associated with $-\Delta$ in $2B_0$. Note then that
	$$|{\bf g}|\leq C\, {\bf I}_{1}(\chi_{2B_0} \mu)$$
	and with ${\bf f}={\bf g} |{\bf g}|^{\frac{2-p}{p-1}}$ we have 		$|{\bf f}|^{p-2} {\bf f}={\bf g}$. Thus ${\rm div}\, (|{\bf f}|^{p-2} {\bf f})=\mu$
	in $2B_0$ and  
	\begin{equation}\label{sigtof}
	|{\bf f}|\leq C\, [{\bf I}_{1}(\chi_{2B_0}\mu)]^{\frac{1}{p-1}} .
	\end{equation}
	
	By \eqref{fanaleF}, this completes the proof of the theorem.
\end{proof}

We next prove a gradient estimate for solutions of \eqref{Basic-PDE} under condition \eqref{mucond}.

\begin{theorem}\label{Morreybound} Let  $1<p<n$, and let  $u$ be a nonnegative $p$-superharmonic solution of $-\Delta_p u=\mu$, where $\mu$ satisfies condition \eqref{mucond}. Then we have 
	\begin{equation*}%\label{MforR}
	\norm{\nabla u}_{L^{p, \infty}(B(x,R))} \leq C M^{\frac{1}{p-1}} 
	R^{\frac{n-p}{p}},  \quad \forall x\in\RR^n, \, R>0,
	\end{equation*}
	where 
	\begin{equation}\label{Mnorm}
	M =\sup_{x\in \RR^n, \, R>0} \frac{\mu(B(x,R))}{R^{n-p}}.
	\end{equation}
\end{theorem}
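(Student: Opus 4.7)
The plan is to apply Theorem~\ref{Lorentz-p-thick} on the ball $B_0=B(x,R)$, which gives
\begin{equation*}
\|\nabla u\|_{L^{p,\infty}(B(x,R))}\leq C|B_0|^{\frac{1}{p}}\Bigl(\frac{1}{|2B_0|}\int_{2B_0}|\nabla u|^{p-\delta}\,dy\Bigr)^{\!\frac{1}{p-\delta}}+C\|[{\bf I}_1(\chi_{2B_0}\mu)]^{\frac{1}{p-1}}\|_{L^{p,\infty}(B_0)},
\end{equation*}
and then to bound each of the two terms on the right-hand side by $CM^{1/(p-1)}R^{(n-p)/p}$.

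For the Riesz-potential term, since $\mu$ satisfies \eqref{mucond} with constant $M$, the restricted measure $\chi_{2B_0}\mu$ inherits the same Morrey bound, so the weak Morrey estimate \eqref{weakmorrey} yields $\|{\bf I}_1(\chi_{2B_0}\mu)\|_{L^{p/(p-1),\infty}(B_0)}\leq CMR^{(n-p)(p-1)/p}$. Combined with the Lorentz-space identity $\|f^{1/(p-1)}\|_{L^{p,\infty}}=\|f\|_{L^{p/(p-1),\infty}}^{1/(p-1)}$ (valid for $f\geq 0$), this produces the desired bound $CM^{1/(p-1)}R^{(n-p)/p}$ for that term.

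For the remaining gradient-average term, the goal is to prove the Morrey-type estimate $R^{(p-\delta)-n}\int_{2B_0}|\nabla u|^{p-\delta}\,dy\leq CM^{(p-\delta)/(p-1)}$. By the strong minimum principle, any nontrivial non-negative $p$-superharmonic function on $\RR^n$ (with $1<p<n$) satisfies $\inf_{\RR^n}u=\liminf_{|y|\to\infty}u$, so the shift $\tilde u := u-\inf_{\RR^n}u$ produces a non-negative $p$-superharmonic solution of the same equation with identical gradient and $\liminf_{|y|\to\infty}\tilde u=0$. Thus $\tilde u$ solves \eqref{Basic-PDE}, which forces \eqref{finiteness} and, by \eqref{K-M}, gives $\tilde u\leq C\,{\bf W}_p\mu$ pointwise. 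Applying Lemma~\ref{decaylem} to $\tilde u$ with $q=p-\delta$ and $0<\epsilon<p-1$ then controls the gradient average by a constant depending on $M$ times $[\inf_{B(x,4R)}\tilde u]^{(p-1-\epsilon)/p}+\inf_{B(x,4R)}\tilde u$.

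The main obstacle is the uniform control of $\inf_{B(x,4R)}\tilde u$ in terms of $M^{1/(p-1)}$. A direct pointwise estimate via the Wolff potential is inadequate, as ${\bf W}_p\mu$ need not be locally bounded under \eqref{mucond} alone. Instead, I would retrace the Hedberg-type argument from the proof of Lemma~\ref{decaylem}, using the restricted measure $\mu_{B(x,CR)}$, a suitable $0<\alpha<p$, and the weak Morrey bound $\|{\bf I}_\alpha\mu\|_{L^{p/(p-\alpha),\infty}(B(x,CR))}\leq CMR^{(n-p)(p-\alpha)/p}$, to obtain an averaged estimate $\int_{B(x,4R)}\tilde u^\theta\,dy\leq CR^n$ for an appropriate exponent $\theta$, with constant depending only on $M$, which controls the infimum and closes the estimate. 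The scaling $u\mapsto u/M^{1/(p-1)}$, $\mu\mapsto\mu/M$ normalizes \eqref{mucond} to $M=1$ and makes the final bookkeeping of $M$ transparent.
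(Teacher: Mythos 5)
Your first two steps match the paper: applying Theorem \ref{Lorentz-p-thick} and bounding the Riesz-potential term via \eqref{weakmorrey} and the identity $\|f^{1/(p-1)}\|_{L^{p,\infty}} = \|f\|^{1/(p-1)}_{L^{p/(p-1),\infty}}$ is exactly the paper's route. But the proposed treatment of the gradient-average term has a genuine gap. You want a bound $\int_{B(x,4R)}\tilde u^{\theta}\,dy\le C(M)R^{n}$, which by weak Harnack would give $\inf_{B(x,4R)}\tilde u\le C(M)$. This is false under \eqref{mucond} alone: the Hedberg argument in the proof of Lemma \ref{decaylem} only controls the local part $\int_{0}^{6R}(\cdots)$ of the Wolff potential, and the far-field contribution survives as the term $\inf_{B(x,2R)}u$ in \eqref{grad}, which that argument cannot absorb. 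For instance, take $d\mu=|y|^{-p}\chi_{\{1<|y|<N\}}\,dy$: then \eqref{mucond} holds with $M$ independent of $N$, $\mu$ is compactly supported so $\inf_{\RR^n}u=0$, yet ${\bf W}_p\mu(y)\ge c\log N$ for $|y|\le 4$, so $\inf_{B(0,4)}\tilde u$ and $\int_{B(0,4)}\tilde u^\theta\,dy$ both blow up as $N\to\infty$. (The gradient average over $B(0,4)$ really is small here, as Theorem \ref{Morreybound} asserts, but only because the distant mass adds an almost-constant layer to $u$ on that ball; Lemma \ref{decaylem} alone cannot detect this cancellation.)

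The paper avoids the obstacle by never attempting to bound $\phi(\rho)=\int_{B(z_0,\rho)}|\nabla u|^{p-\delta}\,dy$ on a fixed ball in terms of $M$. Instead it proves the Campanato-type decay inequality \eqref{iteration-first0} by comparison with a $p$-harmonic replacement on a large ball $B(z_0,r)$, applies the iteration lemma of \cite[Lemma 3.4]{HL} to obtain $\phi(2R_0)\le C(R_0/r_0)^{n-p+\delta}\phi(r_0)+CM^{(p-\delta)/(p-1)}R_0^{n-p+\delta}$ for every $r_0>4R_0+|z_0|$, and then lets $r_0\to\infty$, using only the qualitative decay \eqref{decayMor} of Lemma \ref{decaylem} -- not the quantitative bound \eqref{grad} -- to eliminate the first term. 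This global-to-local iteration is the essential mechanism missing from your proposal, and without it the infimum estimate you require simply does not hold.
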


\begin{proof} Let $B_0=B(z_0,R_0)$ be any fixed  ball. By Theorem \ref{Lorentz-p-thick} we have 
	\begin{eqnarray}\label{B0bound2}
	\|\nabla u\|_{L^{p, \infty}(B_0))}&\leq& C\, |B_0|^{\frac{1}{p}}  \left(\frac{1}{|2B_0|}\int_{2B_0} |\nabla u|^{p-\delta} dx\right)^{\frac{1}{p-\delta}}   \\
	&& +\,  C\, \norm{[{\rm\bf I}_1(\chi_{2B_0}\mu)]^{\frac{1}{p-1}}}_{L^{p,\infty}(B_0)}\nonumber
	\end{eqnarray}
	for a constant  $\delta=\delta(n,p)\in (0, p-1)$ and we may assume that $\delta$ is sufficiently small.
	For any $r_0> 4R_0 + |z_0|$ and any  $r\in(0,r_0]$, let  $w\in u+ W^{1,\, \pmd}_{0}(B(z_0, r))$ solve
	\begin{equation*}
	\left\{ \begin{array}{rcl}
	\Delta_p w&=&0   \quad \text{in} ~B(z_0, r), \\ 
	w&=&u  \quad \text{on}~ \partial B(z_0,r).
	\end{array}\right.
	\end{equation*}
	
	By \cite[Lemma 2.7]{AP}   for any $0<\rho\leq r$ we have 
	\beas
	\label{holder-first-approx}
	\integral_{B(z_0,\rho)} |\nabla w|^{\pmd} \, dy \leq C  (\rho/r)^{n+(\pmd)(\beta_0-1)} \integral_{B(z_0, r)} |\nabla w|^{\pmd} \, dy,
	\enas
	for some  $\beta_0=\beta_0(n,p)\in(0,1/2]$. 
	Then by using \eqref{sigtof} and  arguing as in the proof of
	\cite[Equation (5.4)]{AP} we have

	\bea\label{iteration-first0}
	\phi(\rho) \leq C &\bigg[ \left( \frac{\rho}{r} \right)^{n+(\pmd)(\beta_0 -1)} + \delta^{(\pmd)\min\left\{1,\frac{1}{p-1}\right\}} +\ep \bigg] \phi(r)  \\
	& \qquad +\, C(\ep)   \,   \integral_{B(z_0, r)} {\bf I}_1(\chi_{B(z_0, r_0)}\mu )^{\frac{\pmd}{p-1}}\, dx,
	\ena
	which holds for all $\ep>0$  and $\rho\in (0, r]$. In  \eqref{iteration-first0}, we set
	$$\phi(\rho) = \integral_{B(z_0, \rho)} |\nabla u|^{\pmd}\, dx.$$

	Now by H\"older's inequality and \eqref{weakmorrey} we have 
	\begin{align*}
	\integral_{B(z_0, r)} {\bf I}_1(\chi_{B(z_0, r_0)}\mu )^{\frac{\pmd}{p-1}}\, dx &\leq C \norm{{\bf I}_1 \mu}_{L^{\frac{p}{p-1}, \infty}(B(z_0,r))}^{\frac{p-\delta}{p-1}} r^{\frac{n\delta}{p}}\\
	& \leq C M^{\frac{p-\delta}{p-1}} r^\frac{(n-p)(p-\delta)+n\delta }{p}\\
	& \leq C M^{\frac{p-\delta}{p-1}} r^{n-p+\delta},
	\end{align*}
	where $M$ is defined in \eqref{Mnorm}. 	Thus it follows from \eqref{iteration-first0} that 
	\beas
	\phi(\rho) \leq C &\bigg[ \left( \frac{\rho}{r} \right)^{n+(\pmd)(\beta_0 -1)} + \delta^{(\pmd)\min\left\{1,\frac{1}{p-1}\right\}} +\ep \bigg] \phi(r)  \\
	& \qquad +\, C(\ep)   \,   M^{\frac{p-\delta}{p-1}} r^{n-p+\delta},
	\enas
	which holds for all $\ep>0$  and $\rho\in (0, r]$. As $n-p+\delta< n+(\pmd)(\beta_0 -1)$,
	we can  apply  \cite[Lemma 3.4]{HL} to obtain  
	\beas
	\label{after-iteration}
	\phi(\rho) \leq C  \left( \frac{\rho}{r} \right)^{n-p+\delta}  \phi(r) +  C  M^{\frac{p-\delta}{p-1}} \rho^{n-p+\delta}  
	\enas
	provided   $\delta$ is sufficiently small. Since this estimate holds for all $0<\rho \leq  r \leq  r_0$, we may choose 
	$\rho = 2R_0$ and $r=r_0$ to deduce 
	\begin{align}
	\label{after-iteration-2}
	\integral_{B(z_0, 2R_0)} |\nabla u|^{\pmd}\, dx &\leq C  \left( \frac{R_0}{r_0} \right)^{n-p+\delta}  \integral_{B(z_0, r_0)} |\nabla u|^{\pmd}\, dx\\
	&  \qquad +\,  C  M^{\frac{p-\delta}{p-1}}  R_0^{n-p+\delta}\nonumber\\
	&\leq C  \left( \frac{R_0}{r_0} \right)^{n-p+\delta}  \integral_{B(0, 2r_0)} |\nabla u|^{\pmd}\, dx \nonumber\\
	&  \qquad  +\,  C  M^{\frac{p-\delta}{p-1}}  R_0^{n-p+\delta}, \nonumber
	\end{align}
	where we used that  $B(z_0,r_0)\subset B(0, 2r_0)$.
	
	At this point we combine \eqref{B0bound2}, \eqref{after-iteration-2}, and \eqref{weakmorrey}  to arrive at 
	\begin{eqnarray*}
		\|\nabla u\|_{L^{p, \infty}(B(z_0, R_0)))}&\leq& C\, R_0^{\frac{n-p}{p}}  \left( r_0^{-n+p-\delta}\int_{B(0, 2r_0)} |\nabla u|^{p-\delta} dx\right)^{\frac{1}{p-\delta}}   \\
		&& \qquad  +\,  C\, M^{\frac{1}{p-1}} R_0^{\frac{n-p}{p}}.
	\end{eqnarray*}
	
	Finally, letting $r_0\rightarrow \infty$ and applying 	Lemma \ref{decaylem} we complete the proof of the theorem.
\end{proof}	

We conclude this section with the following remarks regarding quasilinear equations with more general nonlinear structure.
\begin{remark}
	Theorems \ref{Lorentz-p-thick} and \ref{Morreybound} also hold for more general equations of the form 
	\begin{equation}\label{genstru}
	-{\rm div}\mathcal{A}(x, \nabla u)=\mu,
	\end{equation}	
	where $\aa: \RR^n\times\RR^n \rightarrow \RR^n$  is measurable in $x$ for every $\xi$, continuous in 
	$\xi$ for a.e. $x$, and  $\aa(x,0)=0$ for a.e. $x\in \RR^n$. Moreover, $\aa$ is assumed to satisfy that 
	\begin{equation}\label{monotone}
	\langle\aa(x,\xi)-\aa(x,\zeta),\xi-\zeta \rangle\geq
	\Lambda_0 (|\xi|^2+|\zeta|^2)^{\frac{p-2}{2}}|\xi-\zeta|^2 
	\end{equation}
	and for some $\gamma\in(0,1)$,
	\begin{align}
	\label{ellipticity}
	|\aa(x,\xi) - \aa(x,\zeta)| \leq \Lambda_1 |\xi - \zeta|^{\gamma} (|\xi|^2+|\zeta|^2)^{\frac{p-1 - \gamma}{2}}
	\end{align}
	for every $(\xi,\zeta)\in\RR^n \times\RR^n\setminus\{(0,0)\}$ and a.e. $x \in \RR^n$. Here $\Lambda_0$ and $\Lambda_1$ are  positive constants. 
\end{remark}

\begin{remark} 
	Condition \eqref{ellipticity} above can be  replaced with the weaker condition
	\begin{align}
	\label{ellipticity22}
	|\aa(x,\xi)| \leq \Lambda_1 |\xi|^{p-1}.
	\end{align}
\end{remark}

Indeed, for $\frac{3n-2}{2n-1}<p<n$,  this can be done similarly using the method of \cite{Ph} and the  comparison estimate of \cite[Lemma 2.2]{NP1} (see also \cite{Min2}, where this method was first utilized in the case $p\geq 2$). For $1<p\leq \frac{3n-2}{2n-1}$,  using the method of \cite{Ph} and the recent comparison estimate of \cite[Lemma 2.1]{NP2},  one can obtain the following version of \eqref{B0bound}: 

\textit{There exists $\epsilon_0=\epsilon_0(n,p,\Lambda_0, \Lambda_1)\in (0, 2(p-1))$
	such that  for $2-p+\epsilon_0<q<p+\epsilon_0$,
	\begin{eqnarray}\label{local-q}
	\|\nabla u\|_{L^{q, \infty}(B_0))}&\leq& C(\epsilon)\, |B_0|^{\frac{1}{q}}  \left(\frac{1}{|2B_0|}\int_{2B_0} |\nabla u|^{2-p} dx\right)^{\frac{1}{2-p}}   \\
	&& +\,  C(\epsilon)\, \norm{[{\rm\bf I}_1(\chi_{2B_0}\mu)]^{\frac{1}{p-1}}}_{L^{q, \infty}(B_0)}\nonumber\\
	&& +\, \epsilon\, \|\nabla u\|_{L^{q, \infty}(2B_0)}  \nonumber
	\end{eqnarray} 
	for all balls $B_0$ and all $\epsilon>0$.} 

(This estimate still holds if the weak $L^q$ norms are replaced with the  $L^q$ norms). The constant $C(\epsilon)$ is independent of $q$.
Thus, for  $2-p+\epsilon_0<q<p$,  by Lemma \ref{decaylem} and a covering/iteration argument    (see, e.g., \cite{AMP}) the term $\epsilon\, \|\nabla u\|_{L^{q, \infty}(2B_0)}$ on the right-hand side can be absorbed yielding that 
\begin{eqnarray}\label{qlessp}
\|\nabla u\|_{L^{q, \infty}(B_0))}&\leq& C\, |B_0|^{\frac{1}{q}}  \left(\frac{1}{|2B_0|}\int_{2B_0} |\nabla u|^{2-p} dx\right)^{\frac{1}{2-p}}   \\
&& +\,  C\, \norm{[{\rm\bf I}_1(\chi_{2B_0}\mu)]^{\frac{1}{p-1}}}_{L^{q, \infty}(B_0)}  \nonumber
\end{eqnarray} 
for all balls $B_0$. Thus letting $q\uparrow p$ we see that \eqref{qlessp}  holds with $q=p$ as well.
From this we obtain analogues of Theorems \ref{Lorentz-p-thick} and \ref{Morreybound} under the above assumptions on $\mathcal{A}$.

Using Poincar\'e's inequality we deduce the following BMO estimate. 

\begin{corollary}	Let $1<p<n$, and let $\mu$ satisfy condition \eqref{mucond}. Under assumptions 
	\eqref{monotone} and  \eqref{ellipticity22} on $\mathcal{A}$, 
	for any nonnegative 
	$\mathcal{A}$-superharmonic solution $u$ to \eqref{genstru} we have 
	\begin{equation*}
	\norm{u}_{{\rm BMO}(\RR^n)} \leq C M^{\frac{1}{p-1}}, 
	\end{equation*}
	where $M$ is the constant in \eqref{Mnorm}, and $C$ depends only 
	on $p, n, \Lambda_0, \Lambda_1$. 
\end{corollary}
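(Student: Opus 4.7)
The strategy is to combine the weak-$L^p$ Morrey-type gradient estimate (the analogue of Theorem \ref{Morreybound} for the general operator $\mathcal{A}$, which was discussed in the preceding remarks for $1<p<n$ under \eqref{monotone} and \eqref{ellipticity22}) with Poincar\'e's inequality on an arbitrary ball.

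First, I would invoke the generalized gradient bound: for any ball $B=B(x,R)\subset\RR^n$,
\begin{equation*}
\norm{\nabla u}_{L^{p, \infty}(B)} \leq C M^{\frac{1}{p-1}} R^{\frac{n-p}{p}},
\end{equation*}
with $C=C(n,p,\Lambda_0,\Lambda_1)$. As explained in the remarks following Theorem \ref{Morreybound}, this follows from \eqref{qlessp} (together with Lemma \ref{decaylem}, applied in its version for the operator $\mathcal{A}$) by the same covering/iteration and letting $r_0\to\infty$ argument used in the proof of Theorem \ref{Morreybound}.

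Next, for any ball $B=B(x,R)$, Poincar\'e's inequality gives
\begin{equation*}
\frac{1}{|B|} \int_{B} |u - \bar{u}_B| \, dy \leq C\, R \cdot \frac{1}{|B|} \int_{B} |\nabla u|\, dy.
\end{equation*}
Since $p>1$, the weak $L^p$ space embeds continuously into $L^1$ on bounded sets, and
\begin{equation*}
\int_{B} |\nabla u|\, dy \leq C\, |B|^{1-\frac{1}{p}}\, \norm{\nabla u}_{L^{p,\infty}(B)}.
\end{equation*}
Combining the two displays with the gradient estimate yields
\begin{equation*}
\frac{1}{|B|} \int_{B} |u - \bar{u}_B|\, dy \leq C\, R\, |B|^{-\frac{1}{p}} \norm{\nabla u}_{L^{p,\infty}(B)} \leq C\, R \cdot R^{-\frac{n}{p}} \cdot M^{\frac{1}{p-1}} R^{\frac{n-p}{p}} = C\, M^{\frac{1}{p-1}},
\end{equation*}
with $C$ independent of $x$ and $R$. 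Taking the supremum over balls gives the asserted BMO bound.

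The only substantive point is ensuring the Morrey-type gradient estimate is available for the general operator $\mathcal{A}$ across the full range $1<p<n$; this was addressed in the preceding remarks by the two-case discussion (using the method of \cite{Ph} and the comparison estimates of \cite{NP1}, \cite{NP2}), and once it is in hand the remainder is a direct Poincar\'e calculation. Thus there is no new obstacle beyond what has already been established.
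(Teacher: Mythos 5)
Your proof is correct and follows the paper's own reasoning: the corollary is derived by combining the $\mathcal{A}$-analogue of Theorem \ref{Morreybound} (established in the preceding remarks via \eqref{qlessp} and Lemma \ref{decaylem}) with Poincar\'e's inequality and the embedding of weak $L^p$ into $L^1$ on balls, exactly the computation carried out at the end of the proof of Theorem \ref{thm1}. No gaps.
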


\begin{remark} In the case $0<q<p-1$, Theorem \ref{thm3} and Corollary \ref{cor-1} 
	are deduced exactly as in \cite{V1} using Theorems \ref{thm1}, \ref{Lorentz-p-thick} and \ref{Morreybound} in place of the corresponding statements of \cite[Lemma 3.1]{V1}.  
\end{remark}

\section{Proof of Theorem \ref{thm2}}\label{sec3}

In this section we treat the case $q>p-1$ in \eqref{pq-PDE}. Let $1<p<n$.  As was shown in \cite{PV2}, the existence of a solution $u$ to \eqref{pq-PDE} is equivalent to condition (a) of Theorem \ref{thm2}, 
with the small constant $0<c \le c(n, p, q)$ in the sufficiency part, and some $c>0$ 
in the  necessity part, where ${\bf W}_p \mu \not\equiv\infty$. Let $d \omega =u^q d\sigma + d\mu$. 
By Theorem \ref{thm1}, any solution $u$ to \eqref{pq-PDE} lies in 
${\rm BMO}(\RR^n)$
if and only if 
\begin{equation}\label{omega-est}
\omega (B(x, R)) \le C \, R^{n-p}, \quad \forall x \in \RR^n, \, R>0.
\end{equation}
In particular, $\mu$ satisfies \eqref{mucond}. Also, by the lower bound in \eqref{K-M}, 
we have $u \ge C \, {\bf W}_p \mu$, so that by \eqref{mucond} 
\begin{equation}\label{Wmu-est}
\int_{B(x, R)} ({\bf W}_p \mu)^q d \sigma  \le C \, R^{n-p}, \quad \forall x \in \RR^n, \, R>0.
\end{equation}
This yields the necessity of condition (b) in  Theorem \ref{thm2},   since for 
all $y\in B(x, R)$ and $r>R$, we have $B(x, r)\subset B(y, 2r)$, and consequently 
\begin{align*}
{\bf W}_p \mu(y) = & 2^{-\frac{n-p}{p-1}} \,  \int_{0}^\infty \left( \frac{\mu(B(y, 2r))}{r^{n-p}}\right)^{\frac{1}{p-1}}
\frac{dr}{r} \\ \ge & 2^{-\frac{n-p}{p-1}} \,  \int_{R}^\infty \left( \frac{\mu(B(x, r))}{r^{n-p}}\right)^{\frac{1}{p-1}}
\frac{dr}{r}.  
\end{align*}

Conversely, suppose that \eqref{mucond}, and both condition (a) with the small constant $c \le c(n, p, q)$, and condition (b)  of Theorem \ref{thm2} hold. Then the  solution $u$ constructed in \cite[Theorem 3.10]{PV2} admits the upper bound $u \le C \, {\bf W}_p \mu$. Hence, to verify \eqref{omega-est}, it remains to show that \eqref{Wmu-est} holds. 

For $B=B(x, R)$, we write $\mu=\mu_{2B} + \mu_{(2B)^c}$. Then clearly 
$$
{\bf W}_p \mu \le c \Big( {\bf W}_p \mu_{2B} + {\bf W}_p \mu_{(2B)^c} \Big),
$$
where $c$ depends only on $p$. Arguing as above, for all $y\in B(x, R)$, we have 
$B(y,r)\cap (2B)^c=\emptyset$ if $0<r<R$, and 
$B(y,r)\cap (2B)^c\subset B(x, 2r))$ for $r\ge R$, so that
$$
{\bf W}_p \mu_{(2B)^c}(y) \le C \,  \int_{R}^\infty \left( \frac{\mu(B(x, r))}{r^{n-p}}\right)^{\frac{1}{p-1}}
\frac{dr}{r}.    
$$
Hence by condition (b)  of Theorem \ref{thm2}, we see that \eqref{Wmu-est} holds for 
$\mu_{(2B)^c}$ in place of $\mu$. Also, as was shown in \cite[Theorem 3.1]{PV2} 
(with $g=\chi_{2B}$), condition (a) of Theorem  \ref{thm2} yields 
$$
\int_B ( {\bf W}_p \mu_{2B})^q d \sigma \le C \, \mu(2B).  
$$
Since $\mu(2B) \le C \, R^{n-p}$ by  \eqref{mucond}, combining the preceding 
estimates we deduce \eqref{Wmu-est}. This completes the proof of Theorem \ref{thm2}. \qed

\section{Proof of Theorem \ref{thm3}} %\label{sec4}

In this section we treat the case $0<q<p-1$ in \eqref{pq-PDE}. Let $1<p<n$. It was proved in \cite{CV} (see also \cite{V2}) that a nontrivial solution to \eqref{pq-PDE} exists if and only if \eqref{finiteness} holds, i.e., ${\bf W}_p \mu\not\equiv\infty$, and 
\begin{equation}\label{cond-K}
\int_1^{\infty} \left(\frac{ \kappa(B(0, r))^{\frac{q(p-1)}{p-1-q}}}{r^{n- p}}\right)^{\frac{1}{p-1}}\frac{dr}{r} < \infty.  
\end{equation}

Condition \eqref{cond-K} ensures that ${\bf K}_{p, q}\sigma\not\equiv\infty$, where 
${\bf K}_{p, q}$ is the so-called  \textit{intrinsic} nonlinear potential introduced in \cite{CV}, 
\begin{equation*}
{\bf K}_{p, q}  \sigma (x)  =  \int_0^{\infty} \left(\frac{ \kappa(B(x, r))^{\frac{q(p-1)}{p-1-q}}}{r^{n- p}}\right)^{\frac{1}{p-1}}\frac{dr}{r}, \quad x \in \RR^n.
\end{equation*} 
Moreover, as was proved recently in \cite{V2}, any nontrivial solution $u$ to \eqref{pq-PDE} satisfies the bilateral estimates 
\begin{equation} \label{bilateral}
\begin{aligned}
c^{-1} & \left [({\bf W}_p \sigma(x))^{\frac{p-1}{p-1-q}}+ {\bf K}_{p, q}  \sigma(x) 
+ {\bf W}_p \mu(x)\right] 
\le u (x) \\ &\le c \left[({\bf W}_p \sigma(x))^{\frac{p-1}{p-1-q}} + {\bf K}_{p, q}  \sigma(x) 
+ {\bf W}_p \mu(x)\right],  \quad x\in \RR^n, 
\end{aligned}
\end{equation}
where $c>0$ is a constant which  depends only on $p$, $q$, and $n$. 

As in the case $q>p-1$, by Theorem \ref{thm1}, any solution $u$ to \eqref{pq-PDE} lies in 
${\rm BMO}(\RR^n)$
if and only if \eqref{omega-est} holds, where $d \omega =u^q d\sigma + d\mu$. 
In view of \eqref{bilateral}, $u \in {\rm BMO}(\RR^n)$ if and only 
if  both conditions \eqref{mucond} and \eqref{Wmu-est} hold, and also the following two conditions hold 
for all $x \in \RR^n$ and $R>0$:
\begin{equation}\label{Wsigma-pq}
\int_{B(x, R)} ({\bf W}_p \sigma)^{\frac{q(p-1)}{p-1-q}} d \sigma  \le C \, R^{n-p}, \end{equation}
\begin{equation}\label{Ksigma-est}
\int_{B(x, R)} ({\bf K}_{p, q} \sigma)^q d \sigma  \le C \, R^{n-p}. 
\end{equation}

We first show that \eqref{mucond} together with conditions (a)--(d) of Theorem \ref{thm3} yield \eqref{Wmu-est}, \eqref{Wsigma-pq}, and \eqref{Ksigma-est}.

As in the case $q>p-1$ above, \eqref{Wmu-est} splits into two parts: condition (b) of Theorem \ref{thm3}, and 
\begin{equation}\label{Wsigma-mu}
\int_{B} ( {\bf W}_p \mu_{2B})^q d \sigma \le C \, R^{n-p},  
\end{equation} 
where $B=B(x, R)$. To prove the preceding estimate, notice that by \eqref{bestcst} 
applied to $\nu=\mu_{2B}$, we have 
$$
\int_{B} ( {\bf W}_p \mu_{2B})^q d \sigma \le \kappa(B)^q \, \mu(2B)^\frac{q}{p-1}.
$$
By \eqref{mucond}, it follows that $\mu(2B)\le C \, R^{n-p}$, and  by condition (a), we have $\kappa(B)^q \le C \, R^{\frac{(n-p)(p-1-q)}{p-1}}$. 
Hence,  \eqref{Wsigma-mu} follows from \eqref{mucond}\&(a), and consequently 
\eqref{Wmu-est}  follows from (a)\&(b)\&\eqref{mucond}. 

To prove \eqref{Wsigma-pq}, for $B=B(x, R)$, we write 
$\sigma=\sigma_{2B} + \sigma_{(2B)^c}$. Again, \eqref{Wsigma-pq} splits into two 
parts. Arguing as above in the case $q>p-1$  we have 
\begin{align*}
\int_{B} ({\bf W}_p \sigma_{(2B)^c})^{\frac{q(p-1)}{p-1-q}} d \sigma & \le C \, 
\sigma(B) \, \left[\int_R^{\infty} \left(\frac{ \sigma(B(x, r))^{\frac{q(p-1)}{p-1-q}}}{r^{n- p}}\right)^{\frac{1}{p-1}}\frac{dr}{r}\right]^{\frac{q(p-1)}{p-1-q}} \\& \le C \, R^{n-p}, 
\end{align*}
by condition (c) of Theorem \ref{thm3}. 

Next, denote by $v_{2B}\in L^q(\sigma_{2B})$ the nontrivial solution to the equation 
\begin{equation}\label{v2b}
v_{2B} = {\bf W}_p (v^q_{2B}\sigma_{2B}) \quad \text{in} \, \, \RR^n
\end{equation} 
constructed in \cite{CV}, which exists since $\kappa(2B)<\infty$ by condition (a) of Theorem \ref{thm3}. By 
\cite[Corollary 4.3]{CV}, 
\begin{equation}\label{cor4.3}
\int_{2B} (v_{2B})^q d \sigma \le \kappa(2B)^{\frac{q(p-1)}{p-1-q}}.   
\end{equation} 
On the other hand, $v_{2B}\ge C \,  {\bf W}_p \sigma_{2B})^{\frac{p-1}{p-1-q}}$ 
by the lower estimate in \eqref{bilateral}. Combining these estimates yields 
$$
\int_{B} ({\bf W}_p \sigma_{2B})^{\frac{q(p-1)}{p-1-q}} d \sigma 
\le  C \kappa(2B)^{\frac{q(p-1)}{p-1-q}}\le C R^{n-p}
$$
by condition (a). 

We now prove \eqref{Ksigma-est}. For $y\in B=B(x, R)$, we split 
${\bf K}_{p, q} \sigma(y)$ into two parts, 
\begin{align*}
{\bf K}_{p, q} \sigma(y)  = I + II =
& \int_0^{R} \left(\frac{ \kappa(B(y, r))^{\frac{q(p-1)}{p-1-q}}}{r^{n- p}}\right)^{\frac{1}{p-1}}\frac{dr}{r} \\& + \int_R^{\infty} \left(\frac{ \kappa(B(y, r))^{\frac{q(p-1)}{p-1-q}}}{r^{n- p}}\right)^{\frac{1}{p-1}}\frac{dr}{r}. 
\end{align*}
To estimate the term involving $I$, notice that $B(y, r) \subset 2B$ for $0<r\le R$. Hence by 
the lower estimate in \eqref{bilateral} with $\mu=0$ and $\sigma_{2B}$ in place of 
$\sigma$, we have $I \le C \, v_{2B}$, where $v_{2B}$ is defined by \eqref{v2b}. 
It follows that 
$$
\int_{B} I^q d \sigma \le C \, \int_{B} v^q_{2B} d \sigma. 
$$
By the preceding estimate, \eqref{cor4.3}, and condition (a), we deduce 
$$
\int_{B} I^q d \sigma  \le C \,\kappa(2B)^{\frac{q(p-1)}{p-1-q}} \le C \, R^{n-p}. 
$$

For $r>R$ and $y\in B$, we obviously have $B(y, r) \subset B(x, 2r)$, so that 
$\kappa(B(y, r))\le \kappa(B(x, 2r))$, and consequently, for all $y\in B$,  
$$
II\le 2^{\frac{n-p}{p-1}} \, \int_{2R}^{\infty} \left(\frac{ \kappa(B(x, r))^{\frac{q(p-1)}{p-1-q}}}{r^{n- p}}\right)^{\frac{1}{p-1}} \frac{dr}{r}. 
$$
It follows that
$$
\int_{B} II^q d \sigma \le C \, \sigma(B) \,  \left[\int_{2R}^{\infty} \left(\frac{ \kappa(B(x, r))^{\frac{q(p-1)}{p-1-q}}}{r^{n- p}}\right)^{\frac{1}{p-1}} \frac{dr}{r}\right]^q\le C \, R^{n-p}
$$
by condition (d) of Theorem \ref{thm3}. This proves that \eqref{Ksigma-est} 
follows from conditions (a)\&(d). Thus, \eqref{omega-est} holds, so that 
$u\in {\rm BMO}(\RR^n)$. 

Conversely, suppose that  $u \in {\rm BMO}(\RR^n)$ is a solution to 
\eqref{pq-PDE}. Then as was mentioned above \eqref{omega-est} holds, 
which obviously yields \eqref{mucond}. Since \eqref{omega-est} also yields  
\begin{equation}\label{uq-est}
\int_B u^q d \sigma \le C \, R^{n-p}, 
\end{equation}
and by \cite[Lemma 4.2]{CV},   
\begin{equation}\label{kappa-uq}
\kappa(B)^{\frac{q(p-1)}{p-1-q}}\le C \,  \int_B u^q d \sigma, 
\end{equation}
we combine \eqref{uq-est} and \eqref{kappa-uq} to obtain (a).

Next, by \eqref{uq-est} and the lower estimate in \eqref{bilateral} we deduce that 
\eqref{Wmu-est}, 
\eqref{Wsigma-pq}, and \eqref{Ksigma-est} hold. 

Notice that condition (b) follows from \eqref{Wmu-est} exactly as in the case 
$q>p-1$ above. Similarly, for all $y \in B=B(x, R)$, we have 
$$
{\bf W}_p \mu(y)  \ge 2^{-\frac{n-p}{p-1}} \,  \int_{R}^\infty \left( \frac{\mu(B(x, r))}{r^{n-p}}\right)^{\frac{1}{p-1}}
\frac{dr}{r}.  
$$
Hence, \eqref{Wsigma-pq} yields condition (c). In the same way, for all $y \in B=B(x, R)$ and $r>R$, we have $B(y, 2r)\supset B(x, r)$, and consequently 
\begin{align*}
{\bf K}_{p, q} \sigma(y) & =2^{-\frac{n-p}{p-1}} \,
\int_0^{\infty} \left(\frac{ \kappa(B(y, 2r))^{\frac{q(p-1)}{p-1-q}}}{r^{n- p}}\right)^{\frac{1}{p-1}}\frac{dr}{r} \\ & \ge 2^{-\frac{n-p}{p-1}}\int_R^{\infty} \left(\frac{ \kappa(B(x, r))^{\frac{q(p-1)}{p-1-q}}}{r^{n- p}}\right)^{\frac{1}{p-1}}
\frac{dr}{r}. 
\end{align*}
This shows that \eqref{Ksigma-est} yields condition (d). The proof of 
Theorem \ref{thm3} is complete. \qed

The proof of Corollary \ref{cor-1} is based on the following pointwise estimate 
for all solutions $u$ to \eqref{pq-PDE} in the case $0<q<p-1$ \cite[Corollary 1.2]{V2},  
\begin{equation*} 
\begin{aligned}
c^{-1} & \left [({\bf W}_p \sigma(x))^{\frac{p-1}{p-1-q}} 
+ {\bf W}_p \mu(x)\right] 
\le u (x) \\ &\le c \left[({\bf W}_p \sigma(x))^{\frac{p-1}{p-1-q}} +  {\bf W}_p \sigma(x) 
+ {\bf W}_p \mu(x)\right],  \quad x\in \RR^n, 
\end{aligned}
\end{equation*}
provided  $\sigma$ satisfies condition \eqref{cap-K}. The argument is similar to that of \cite[Corollary 1.5]{V1} in 
the case $\mu=0$; we omit the details. 

\section{The natural growth case}\label{sec5}

In this section we suppose that $1<p<n$  and 
$\mu, \sigma \in M^{+}(\RR^n)$, where both $\sigma\not=0$ and $\mu\not=0$. It is well known   (see, for instance, \cite{JV}) 
that the capacity condition \eqref{cap-K} with $C=1$ 
is  \textit{necessary}    for the existence 
of a nontrivial solution $u$ to the inequality 
\begin{equation*}
-\Delta_p u \ge \sigma \, u^{p-1}, \quad u\ge 0 \quad \text{in} \, \, \RR^n. 
\end{equation*}

We have to distinguish between 
the cases $p> 2$ and $p\le 2$. We recall that $\mathbf{I}_p$ stands for the Riesz 
potential of order $p$ defined by \eqref{riesz} with $\alpha=p$. It is easy to see that 
\begin{equation}\label{cond-p}
{\bf I}_{p} \sigma \le C \,  (\mathbf{W}_p \sigma)^{p-1} \quad  \textrm{if} \, \, p> 2,  \quad \textrm{and} \quad  
({\bf W}_{p} \sigma)^{p-1} \le C \,  \mathbf{I}_p \sigma \quad  \textrm{if} \, \, p\le 2,  
\end{equation}
where $C$ is a constant which depends only on $p$ and $n$.

\begin{theorem}\label{thm4} Let   $1<p<n$ and $q=p-1$. Suppose 
	$\mu, \sigma\in M_{+}(\RR^n)$,  and 
	\begin{align*}
	& (a) \quad  {\bf W}_{p} \sigma \le C_1  \quad \text{if} \, \, \,  p> 2, \qquad (b) \quad 
	\mathbf{I}_p \sigma \le C_2 \quad \text{if} \, \, \,  p\le 2.
	\end{align*}
	Then there exists a  solution $u \in {\rm BMO}(\RR^{n})$ 
	to \eqref{pq-PDE} if and only if $\mu$ satisfies condition \eqref{mucond}, and 
	for all $x \in \RR^n$, $R>0$, 
	\begin{equation}\label{cond-b}
	\quad \sigma (B(x, R))  \left[ \int_{R}^\infty \left( \frac{\mu(B(x, r))}{r^{n-p}}\right)^{\frac{1}{p-1}}
	\frac{dr}{r}\right]^{p-1}\le C \, R^{n-p},
	\end{equation}
	where the ``if'' part requires the smallness  of the constant $c=c(p, n)$ in the 
	condition 
	\begin{equation}\label{small-const}
	\sigma (K) \le c\, {\rm cap}_p(K), \quad \forall \, {\rm compact  \,  sets} \,  K\subset \RR^n.
	\end{equation}
\end{theorem}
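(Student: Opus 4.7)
The strategy mirrors the proofs of Theorems \ref{thm2} and \ref{thm3}: by Theorem \ref{thm1}, a solution $u$ to \eqref{pq-PDE} lies in ${\rm BMO}(\RR^n)$ if and only if the source measure $\omega=u^{p-1}\sigma+\mu$ satisfies the Morrey bound $\omega(B(x,R))\le CR^{n-p}$ for all balls. What is new in the natural growth case $q=p-1$ is the construction of the solution itself, which relies on the smallness of the capacity constant $c$ in \eqref{small-const}.

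For the necessity direction, suppose $u\in{\rm BMO}(\RR^n)$ solves \eqref{pq-PDE}. Theorem \ref{thm1} applied to $-\Delta_p u=\omega$ forces the Morrey bound on $\omega$, which yields both \eqref{mucond} and $\int_B u^{p-1}\,d\sigma\le CR^{n-p}$ for every ball $B=B(x,R)$. Condition \eqref{cond-b} then follows by combining the Kilpel\"ainen--Mal\'y lower bound $u\ge C_1\,{\bf W}_p\mu$ from \eqref{K-M} with the elementary pointwise estimate
$${\bf W}_p\mu(y)\ge 2^{-\frac{n-p}{p-1}}\int_R^\infty\left(\frac{\mu(B(x,r))}{r^{n-p}}\right)^{\frac{1}{p-1}}\frac{dr}{r},\quad y\in B(x,R),$$
exactly as in the necessity parts of Theorems \ref{thm2} and \ref{thm3}.

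For the sufficiency direction, I would construct $u$ by monotone iteration: set $u_0\equiv 0$ and, given $u_k$, let $u_{k+1}$ be the $p$-superharmonic solution with $\liminf_{|x|\to\infty}u_{k+1}=0$ of
$$-\Delta_p u_{k+1}=\sigma u_k^{p-1}+\mu,$$
produced by Theorem \ref{thm1} once the requisite Morrey bound on its right-hand side is in place. The decisive step is the uniform pointwise bound
$$u_k\le A\,{\bf W}_p\mu\quad\text{in }\RR^n,\qquad k=0,1,2,\ldots,$$
with $A$ independent of $k$. This is where the smallness of $c=c(p,n)$ in \eqref{small-const} enters: a Hardy-type comparison argument in the spirit of \cite{JV}, using that both hypotheses (a) and (b) (via \eqref{cond-p}) provide a uniform bound on ${\bf W}_p\sigma$, allows the feedback $\sigma u_k^{p-1}$ to be absorbed into $A\,{\bf W}_p\mu$ when $c$ is small enough. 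Passing to the limit and invoking the weak continuity result of \cite{TW} produces a solution $u\le A\,{\bf W}_p\mu$ of \eqref{pq-PDE}.

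Granted this pointwise majorization, the Morrey bound on $\omega$ is routine. The piece $\mu(B)\le CR^{n-p}$ is hypothesis \eqref{mucond}; since $u\le A\,{\bf W}_p\mu$, it remains to bound $\int_B({\bf W}_p\mu)^{p-1}\,d\sigma$. Splitting $\mu=\mu_{2B}+\mu_{(2B)^c}$, the non-local piece is controlled by condition \eqref{cond-b} exactly as in the proof of Theorem \ref{thm2}, while the local piece is handled by a bilinear Wolff-type inequality $\int({\bf W}_p\mu_{2B})^{p-1}\,d\sigma\le C\mu(2B)$; this follows from the uniform boundedness of ${\bf W}_p\sigma$ (equivalently of ${\bf I}_p\sigma$ when $p\le 2$), via dyadic decomposition of ${\bf W}_p\mu_{2B}$ together with subadditivity of $t^{p-1}$ and Fubini when $p\le 2$, and via the Maz'ya trace inequality (the capacity condition being implied by ${\bf W}_p\sigma\le C$) when $p>2$. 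Combined with \eqref{mucond}, both cases yield $\le CR^{n-p}$. The main obstacle is therefore the uniform pointwise majorization $u_k\le A\,{\bf W}_p\mu$ in the natural growth setting, which is precisely what forces the smallness of the $p$-capacity constant $c$ in \eqref{small-const}.
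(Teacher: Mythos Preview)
Your overall architecture matches the paper's: reduce to the Morrey bound on $\omega=u^{p-1}\sigma+\mu$ via Theorem~\ref{thm1}, use the lower Kilpel\"ainen--Mal\'y bound for necessity, and for sufficiency obtain a solution with $u\le A\,{\bf W}_p\mu$, split $\mu=\mu_{2B}+\mu_{(2B)^c}$, and handle the tail by \eqref{cond-b}. Two points of divergence are worth noting.

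First, the paper does not re-prove the existence of $u$ and the bilateral bound $u\simeq{\bf W}_p\mu$; it simply cites \cite[Remark 1.3 and Sec.~2]{JV} for this under (a)/(b) together with the smallness of $c$ in \eqref{small-const}. Your monotone iteration is a plausible route to the same conclusion, but the inductive step ``$u_k\le A\,{\bf W}_p\mu\Rightarrow u_{k+1}\le A\,{\bf W}_p\mu$'' is exactly a nonlinear feedback estimate of the type developed in \cite{JV}, so you are in effect re-deriving that result rather than bypassing it.

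Second, and more substantively, your treatment of the local piece $\int_B({\bf W}_p\mu_{2B})^{p-1}\,d\sigma\le C\mu(2B)$ is incomplete when $p>2$. Invoking ``the Maz'ya trace inequality'' does not suffice: the trace inequality equivalent to \eqref{small-const} controls $\|\phi\|_{L^p(\sigma)}$ by $\|\nabla\phi\|_{L^p}$, i.e.\ an $L^p(\sigma)$ norm, whereas here you need an $L^{p-1}(\sigma)$ bound on ${\bf W}_p\mu_{2B}$, and for $p>2$ there is no subadditivity to fall back on. The paper fills this gap with a dedicated lemma (Lemma~\ref{lemma5.2}) proving
\[
\int_{\RR^n}({\bf W}_p\mu)^{p-1}\,d\sigma\le C\,c^{\frac{p-2}{p-1}}\int_{\RR^n}({\bf W}_p\sigma)\,d\mu,
\]
via dualization (write the left side as a pairing with $g\in L^{\frac{p-1}{p-2}}(\sigma)$), a dyadic expansion of $\widetilde{{\bf W}}^d_p\mu$, H\"older's inequality, and the dyadic Carleson measure theorem (which is where \eqref{small-const} enters). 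One then applies assumption (a), ${\bf W}_p\sigma\le C_1$, and \eqref{mucond} to conclude. Your $p\le 2$ argument is correct and coincides with the paper's (subadditivity gives $({\bf W}_p\mu_{2B})^{p-1}\le C\,{\bf I}_p\mu_{2B}$, then Fubini against ${\bf I}_p\sigma\le C_2$), but for $p>2$ you need this extra bilinear Wolff inequality, not merely the trace embedding.
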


\begin{remark} Assumptions {\rm (a)} and {\rm (b)} in Theorem \ref{thm4} 
	are stronger than the necessary condition  \eqref{small-const}  
	for some constant $c$. Without these assumptions,  
	estimates of solutions  are substantially more complicated (see \cite{JV}). 
\end{remark} 

\begin{proof} It is known (\cite[Remark 1.3 and Sec. 2]{JV} that conditions (a)\&(b) of Theorem \ref{thm4}, together with 
	\eqref{small-const} for some small  constant $c=c(p, n)$,  
	ensure that \eqref{pq-PDE} has a solution $u$ such that 
	\begin{equation}\label{p-1-two-sided}
	c_1 \, {\bf W}_p \mu(x) \le u(x)\le  c_2 \, {\bf W}_p \mu(x), \quad x \in \RR^n.
	\end{equation}
	The lower bound obviously holds for all solutions $u$. 
	
	As above,  by Theorem \ref{thm1}, 
	$u \in {\rm BMO}(\RR^n)$ if and only if $\mu$ satisfies \eqref{mucond}, and \eqref{uq-est} holds (with $q=p-1$). By the lower estimate in \eqref{p-1-two-sided}, we see that \eqref{uq-est} 
	yields 
	\begin{equation}\label{p-1-mu-sigma}
	\int_{B(x, R)} ({\bf W}_p \mu)^{p-1} d \sigma \le C \, R^{n-p}, \quad \forall \, x \in \RR^n, 
	\, R>0. 
	\end{equation}

	Exactly as  in the cases $q>p-1$ and $q<p-1$, this estimate yields 
	\eqref{cond-b}, which completes the proof of the ``only if'' part of 
	Theorem \ref{thm4}.

	To prove the ``if'' part, as above we split \eqref{p-1-mu-sigma} into two parts, 
	condition \eqref{cond-b} and 
	\begin{equation}\label{p-1-mu-sigma-loc}
	\int_{B} ({\bf W}_p \mu_{2B})^{p-1} d \sigma \le C \, \, |B|^{\frac{n-p}{n}},      \end{equation}
	where  $B=B(x, R)$. 
	
	We first prove \eqref{p-1-mu-sigma-loc} in the easier case $1<p\le 2$. It follows from     \eqref{cond-p} that $({\bf W}_p \mu_{2B})^{p-1} \le C \, {\bf I}_p \mu_{2B} $, 
	and by Fubini's Theorem, 
	$$
	\int_{B} ({\bf W}_p \mu_{2B})^{p-1} d \sigma \le C  \int_{B} {\bf I}_p \mu_{2B} \,  d \sigma =C  \int_{2B} {\bf I}_p \sigma_{B} \,  d \mu.
	$$
	Since ${\bf I}_p \sigma_{B}\le C_2$ by assumption (b), we deduce 
	$$
	\int_{B} ({\bf W}_p \mu_{2B})^{p-1} d \sigma \le C \, C_2 \mu(2B),
	$$
	and \eqref{p-1-mu-sigma-loc} follows in view of condition \eqref{mucond}. 
	
	We now consider the case $p>2$. Then \eqref{p-1-mu-sigma-loc} can be deduced 
	from \cite[Lemma 4.4]{JV}, but we 
	give here a simplified proof based on the following lemma.
	
	\begin{lemma}\label{lemma5.2} Let $2<p<n$, and let $\mu, \sigma\in M^{+}(\RR^n)$, 
		where $\sigma$ satisfies \eqref{small-const}. Then 
		\begin{equation}\label{lemma-est}
		\int_{\RR^n} ({\bf W}_p \mu)^{p-1} d \sigma \le C \,  c^{\frac{p-2}{p-1}}
		\int_{\RR^n} ({\bf W}_p \sigma) \,  d \mu,
		\end{equation} 
		where $c$ is the constant  in \eqref{small-const}, and  $C$ is a constant which  depends only on $p, n$. 
	\end{lemma}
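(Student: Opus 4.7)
My plan is to bound $I:=\int({\bf W}_p\mu)^{p-1}\,d\sigma$ by first applying Minkowski's integral inequality to reduce to an integral over scales $r$, then invoking the capacity hypothesis on $\sigma$ to interpolate, and finally reassembling $\int{\bf W}_p\sigma\,d\mu$ on the right-hand side. This exactly generalizes the elementary verification for $\mu=\delta_0$, where $({\bf W}_p\delta_0)^{p-1}(x)\sim|x|^{-(n-p)}$ so that $I\sim{\bf I}_p\sigma(0)$, and the pointwise estimate $\sigma(B(0,r))/r^{n-p}\le cC_n$ lets one write $\sigma(B(0,r))/r^{n-p}\le (cC_n)^{(p-2)/(p-1)}(\sigma(B(0,r))/r^{n-p})^{1/(p-1)}$, producing ${\bf I}_p\sigma(0)\le Cc^{(p-2)/(p-1)}{\bf W}_p\sigma(0)$.

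Since $p-1\ge 1$, Minkowski's integral inequality in $L^{p-1}(d\sigma)$ applied to the representation ${\bf W}_p\mu(x)=\int_0^\infty(\mu(B(x,r))/r^{n-p})^{1/(p-1)}\,dr/r$, together with the identity $\int\mu(B(x,r))\,d\sigma(x)=\int\sigma(B(y,r))\,d\mu(y)$, gives
\[ I^{1/(p-1)}\ \le\ \int_0^\infty\Big(r^{p-n}\int_{\RR^n}\sigma(B(y,r))\,d\mu(y)\Big)^{1/(p-1)}\frac{dr}{r}. \]
Next, the capacity bound $\sigma(B(y,r))\le c\,{\rm cap}_p(B(y,r))\le cC_n r^{n-p}$ is used pointwise in $y$ by splitting $\sigma(B(y,r))=\sigma(B(y,r))^{(p-2)/(p-1)}\sigma(B(y,r))^{1/(p-1)}$ and estimating only the first factor, which after substitution yields (setting $\alpha=(n-p)/(p-1)$)
\[ I^{1/(p-1)}\ \le\ Cc^{(p-2)/(p-1)^2}\int_0^\infty r^{-\alpha/(p-1)}\Big(\int\sigma(B(y,r))^{1/(p-1)}\,d\mu(y)\Big)^{1/(p-1)}\frac{dr}{r}. \]

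The last step is to convert the right-hand side into $\big(\int{\bf W}_p\sigma\,d\mu\big)^{1/(p-1)}$ (noting $\int{\bf W}_p\sigma\,d\mu=\int_0^\infty r^{-\alpha}\int\sigma(B(y,r))^{1/(p-1)}d\mu(y)\,dr/r$), which amounts to the scale inequality
\[ \int_0^\infty r^{-\alpha/(p-1)}G(r)^{1/(p-1)}\,\frac{dr}{r}\ \le\ C'\Big(\int_0^\infty r^{-\alpha}G(r)\,\frac{dr}{r}\Big)^{1/(p-1)},\qquad G(r):=\int\sigma(B(y,r))^{1/(p-1)}d\mu(y), \]
which I would establish by a dyadic decomposition in $r$, exploiting the monotonicity of $r\mapsto\sigma(B(y,r))$ and a second use of the capacity bound on each dyadic scale. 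This is the main obstacle: since $1/(p-1)<1$, the naive Jensen/Minkowski inequality on the infinite-measure space $((0,\infty),\,dr/r)$ goes in the wrong direction, so one cannot simply swap the outer $r$-integral and $\mu$-integral, and one must instead use the specific monotone structure of $G$ together with the capacity hypothesis to control the concave operation $t\mapsto t^{1/(p-1)}$ scale by scale. Raising the resulting estimate to the $(p-1)$-th power produces the asserted factor $c^{(p-2)/(p-1)}$ and completes the proof.
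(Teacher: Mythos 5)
The scale inequality you pose at the end of your argument,
\[
\int_0^\infty r^{-\alpha/(p-1)}G(r)^{1/(p-1)}\,\frac{dr}{r}\ \le\ C'\Big(\int_0^\infty r^{-\alpha}G(r)\,\frac{dr}{r}\Big)^{1/(p-1)},
\]
is not just a hard step you defer --- it is false even under the monotonicity of $G$ and the capacity hypothesis, and no choice of $C'$ depending only on $p,n$ can save it. Take $\mu=\delta_0$ and let $\sigma$ be the radial measure with density $\sim |x|^{-p}$ on the annulus $\{\epsilon\le |x|\le 1\}$ and zero elsewhere, so that $\sigma(B(0,r))\sim r^{n-p}$ for $\epsilon\le r\le 1$. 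One checks directly on balls and via the isocapacitary inequality that $\sigma(K)\le c\,{\rm cap}_p(K)$ with $c$ independent of $\epsilon$. Then $G(r)=\sigma(B(0,r))^{1/(p-1)}$ and, since $(n-p)/(p-1)^2=\alpha/(p-1)$, the left-hand side of your scale inequality is $\int_\epsilon^1 dr/r = \log(1/\epsilon)$, while the right-hand side is $C'\big({\bf W}_p\sigma(0)\big)^{1/(p-1)}\sim C'(\log(1/\epsilon))^{1/(p-1)}$. Since $p>2$, the exponent $1/(p-1)<1$ and the inequality fails as $\epsilon\to 0$.

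The underlying problem is that Minkowski's integral inequality in $L^{p-1}(d\sigma)$ (legitimately applied, since $p-1>1$) is already too lossy here: in the same example one has $I\sim\log(1/\epsilon)$, so $I^{1/(p-1)}\sim(\log(1/\epsilon))^{1/(p-1)}$, but after Minkowski plus the capacity bound you obtain an intermediate quantity of order $\log(1/\epsilon)$, larger by a factor $(\log(1/\epsilon))^{(p-2)/(p-1)}$. That overshoot cannot be recovered by any inequality going back the other way, which is exactly what your scale inequality would have to do. This is the structural obstruction: the estimate \eqref{lemma-est} is $(p-1)$-homogeneous on the left and linear on the right, and linearizing the left side via Minkowski throws away precisely the cancellation that makes it true. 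The paper instead \emph{dualizes}: since $p-1>1$, it rewrites \eqref{lemma-est} (in its dyadic form) as a bilinear estimate against $g\in L^{(p-1)/(p-2)}(\sigma)$, applies H\"older's inequality in the dyadic sum with exponents $p-1$ and $(p-1)/(p-2)$, and then controls the $g$-factor by the dyadic Carleson embedding theorem, with the capacity condition \eqref{small-const} entering through its equivalence with a dyadic Carleson measure condition for $\sigma$. That mechanism is not a cosmetic alternative but appears to be essential; a proof along your Minkowski-then-scale-inequality route would need a genuinely new ingredient to replace the Carleson step, and the example above shows that the monotonicity of $G$ together with the pointwise capacity bound on $\sigma(B(y,r))/r^{n-p}$ do not suffice.
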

	
	\begin{proof} It is more convenient to use  dyadic Wolff 
		potentials introduced originally in \cite{HeWo}, in place of ${\bf W}_p \mu$,  
		\begin{equation*}
		{\bf W}^d_p \mu(x) =\sum_{Q \in \mathcal{D}}
		\left ( \frac{\mu(Q)}{ \ell(Q)^{n-p}}\right)^{\frac{1}{p-1} 
		} \chi_Q(x), \quad 
		x \in \RR^n, 
		\end{equation*} 
		where  $\mathcal{D}=\{ Q\}$ is the family of all dyadic cubes in $\RR^n$, and 
		$\ell(Q)$ stands for the side length of $Q$. 
		
		For $Q \in  \mathcal{D}$, we denote by 
		$Q^*$  the concentric cube with side length $\ell(Q^*)=3 \, \ell(Q)$.  
		Clearly, the family of cubes $\{Q^*\}_{Q \in \mathcal{D}}$ has the finite intersection property
		\begin{equation}\label{finite-inter}
		\sum_{\ell(Q)=2^k} \chi_{Q^*}(x) \le \beta(n), \qquad x\in \RR^n, \, \, k\in \mathbb{Z}, 
		\end{equation}
		where $\beta(n)$ is a constant which depends only on $n$. We will actually need  
		a modified version of ${\bf W}^d_p$ defined by 
		\begin{equation*}
		\widetilde{{\bf W}}^d_p \mu = \sum_{Q \in \mathcal{D}}
		\left ( \frac{\mu(Q^*)}{ \ell(Q)^{n-p}}\right)^{\frac{1}{p-1} 
		} \chi_Q(x), \quad 
		x \in \RR^n. 
		\end{equation*} 
		
		It is easy to verify (see \cite[p. 170]{HeWo}) that 
		\begin{equation}\label{d-wolff-A}
		a \,  {\bf W}^d_p \mu \le {\bf W}_p \mu \le A \, \widetilde{{\bf W}}^d_p \mu,
		\end{equation} 
		where the constants $a, A$ depend only on $p$ and $n$.

		In view of \eqref{d-wolff-A}, 
		it is enough to prove the following version of \eqref{lemma-est}, 
		\begin{equation}\label{d-mu-sigma-loc}
		\int_{\RR^n} (\widetilde{{\bf W}}^d_p \mu)^{p-1} d \sigma  \le C \, c^{\frac{p-2}{p-1}} \int_{\RR^n} ({\bf W}_p \sigma) \,  d \mu.   
		\end{equation}

		Since $p>2$, we can  use duality to rewrite  \eqref{d-mu-sigma-loc} in the equivalent form 
		\begin{equation}\label{equiv-form}
		\int_{\RR^n} (\widetilde{{\bf W}}^d_p \mu) \,  g \, d \sigma \le 
		C \, c^{\frac{p-2}{(p-1)^2}}    \left[\int_{\RR^n} ({\bf W}_p \sigma) \,  d \mu\right]^{\frac{1}{p-1}}, 
		\end{equation}
		for all $g \in L^{\frac{p-1}{p-2}} (\RR^n, \sigma)$ such that $||g||_{L^{\frac{p-1}{p-2}} (\RR^n, \sigma)}\le 1$. Interchanging the order of integration and summation 
		on the left-hand side of \eqref{equiv-form}, we see that 
		\eqref{d-mu-sigma-loc}  is equivalent to 
		\begin{equation*}
		I= \sum_{Q \in \mathcal{D}}
		\left ( \frac{\mu(Q^*)}{\ell(Q)^{n-p} }\right)^{\frac{1}{p-1}} \int_Q g \, 
		d \sigma 
		\le  C \, c^{\frac{p-2}{(p-1)^2}} \,  \left[\int_{\RR^n} ({\bf W}_p \sigma) \,  d \mu\right]^{\frac{1}{p-1}}.  
		\end{equation*}
		Using H\"{o}lder's inequality with exponents $p-1$ and $\frac{p-1}{p-2}$, we estimate 
		\begin{equation} \label{holder}
		\begin{aligned}
		I \le &\left[  \sum_{Q \in \mathcal{D}}
		\left ( \frac{\sigma(Q)}{\ell(Q)^{n-p}}\right)^{\frac{1}{p-1}}   \mu(Q^*) 
		\right]^{\frac{1}{p-1}} \\
		& \times \left[  
		\sum_{Q \in \mathcal{D}} \frac{ \sigma(Q)^{p'} }{\ell(Q)^{\frac{n-p}{p-1}}}
		\left ( \frac{1}{\sigma(Q)} \int_Q g \, d \sigma  \right) ^{\frac{p-1}{p-2}}
		\right]^{\frac{p-2}{p-1}}, 
		\end{aligned}
		\end{equation}
		where $p'=\frac{p}{p-1}$. Notice that 
		$$
		\sum_{Q \in \mathcal{D}} \left ( \frac{\sigma(Q)}{\ell(Q)^{n-p} }\right)^{\frac{1}{p-1}}   \mu(Q^*) 
		=\int_{\RR^n} \sum_{Q \in \mathcal{D}} \left ( \frac{\sigma(Q)}{\ell(Q)^{n-p}}\right)^{\frac{1}{p-1}} \chi_{Q^*} \, d \mu.  
		$$
		If $x \in Q^*$, then obviously $Q \subset B(x, \alpha(n) \, \ell(Q))$, where   $\alpha(n)$ is a constant which depends only on $n$. We estimate  
		\begin{align*}
		\sum_{Q \in \mathcal{D}} \left ( \frac{\sigma(Q)}{\ell(Q)^{n-p}}\right)^{\frac{1}{p-1}} \chi_{Q^*}(x)&=\sum_{k\in \mathbb{Z}} \sum_{\ell(Q)=2^k}\left ( \frac{\sigma(Q)}{\ell(Q)^{n-p}}\right)^{\frac{1}{p-1}} \chi_{Q^*}(x)
		\\ & \le \sum_{k\in \mathbb{Z}} \, \left(\frac{\sigma(B(x, \alpha(n) 2^{k}))}    {2^{k(n-p)}}\right)^{\frac{1}{p-1}} 
		\sum_{\ell(Q)=2^k}\chi_Q^* (x).   
		\end{align*}
		Clearly,
		\begin{align*}
		\sum_{k\in \mathbb{Z}} \, \left(\frac{\sigma(B(x, \alpha(n) 2^{k}))}    {2^{k(n-p)}}\right)^{\frac{1}{p-1}} 
		& \le C \int_0^\infty \left(\frac{\sigma(B(x, \alpha(n) r)}{r^{n-p}} \right)^{\frac{1}{p-1}} \frac{dr}{r} 
		\\ &= C \, \alpha(n)^{\frac{n-p}{p-1}}  {\bf W}_p \sigma(x), 
		\end{align*}
		where $C$ depends only on $p$ and $n$. Hence, by the finite intersection property 
		\eqref{finite-inter}, 
		$$
		\sum_{Q \in \mathcal{D}} \left ( \frac{\sigma(Q)}{\ell(Q)^{n-p}}\right)^{\frac{1}{p-1}} \chi_{Q^*}(x) \le C\, \alpha(n)^{\frac{n-p}{p-1}} \, \beta(n)  {\bf W}_p \sigma(x). 
		$$
		Integration both sides of the preceding inequality with respect to $d \mu$ gives 
		$$
		\sum_{Q \in \mathcal{D}} \left ( \frac{\sigma(Q)}{\ell(Q)^{n-p} }\right)^{\frac{1}{p-1}} \mu(Q^*) \le C  \, \alpha(n)^{\frac{n-p}{p-1}} \, \beta(n) \int_{\RR^n} ({\bf W}_p \sigma) \,  
		d \mu. 
		$$

		We estimate the second factor in \eqref{holder} using the dyadic Carleson measure theorem.  We observe that assumption (a) yields 
		the capacity condition 
		\eqref{small-const}. It  
		is known (see \cite[Theorem 3.9]{JV}) that  \eqref{small-const} is equivalent to the dyadic Carleson measure 
		condition 
		$$
		\sum_{Q \subseteq P} \frac{ \sigma(Q)^{p'} }{\ell(Q)^{ \frac{n-p}{p-1} } } \le C \, c^{p'-1} \sigma(P),
		$$
		for all dyadic cubes $P$, where $c$ is the constant 
		in \eqref{small-const}, and 
		$C$ depends only on $p, n$. Hence by the dyadic Carleson measure theorem, 
		$$
		\sum_{Q \in \mathcal{D}} \frac{ \sigma(Q)^{p'} }{ \ell(Q)^{ \frac{n-p}{p-1} }} 
		\left ( \frac{1}{\sigma(Q)} \int_Q g \, d \sigma  \right) ^{\frac{p-1}{p-2}} \le 
		C \,  c^{p'-1} \, ||g||^{\frac{p-1}{p-2}}_{L^{\frac{p-1}{p-2}} (\RR^n, \sigma)}\le C \, c^{p'-1},  
		$$
		since $||g||_{L^{\frac{p-1}{p-2}} (\RR^n, \sigma)}\le 1$. Combining the preceding estimates proves \eqref{lemma-est}. 
		
	\end{proof}

	Applying Lemma \ref{lemma5.2} with $ \mu_{2B}$ and $\sigma_B$ in place of 
	$\mu$ and $\sigma$, respectively, we obtain 
	$$
	\int_B ({\bf W}_p \mu_{2B})^{p-1} d \sigma \le C \,  c^{\frac{p-2}{p-1}} \int_{2B} ({\bf W}_p \sigma_{B}) d \mu.  
	$$
	Invoking  assumption (a) and condition  \eqref{mucond} yields 
	$$
	\int_{2B} ({\bf W}_p \sigma_{B}) d \mu\le C \, C_1 \,  c^{\frac{p-2}{p-1}} \mu(2B)\le C \, |B|^{\frac{n-p}{n}}. 
	$$
	Thus, \eqref{p-1-mu-sigma-loc} holds for all $1<p<n$,  and consequently 
	$u \in {\rm BMO}(\RR^n)$.
\end{proof}

\bibliography{samplebib3_for_arXiv}
\end{document}